\newcommand{\p}{\mathfrak p}
\newcommand{\Z}{\mathbf Z}
\newcommand{\A}{\text{A}}
\newcommand{\I}{\text{I}}
\renewcommand{\P}{\text{P}}
\newcommand{\frakP}{\mathfrak{P}}
\newcommand{\Q}{\mathbf Q}
\newcommand{\R}{\mathbf R}
\newcommand{\F}{\mathbf F}
\newcommand{\calS}{\mathcal S}
\renewcommand{\to}{\rightarrow}
\renewcommand{\o}{\mathcal O}
\renewcommand{\phi}{\varphi}
\renewcommand{\epsilon}{\varepsilon}
\renewcommand{\l}{\ell}
\newcommand{\Gal}{\operatorname{Gal}}
\newcommand{\ord}{\text{ord}}
\newcommand{\Jac}{\operatorname{Jac}}
\newcommand{\Mod}[1]{\ (\text{mod}\ #1)}
\newtheorem*{legendre_thm}{Theorem of Legendre}
\newtheorem*{parity}{Parity Conjecture for quadratic twists}
\newtheorem*{hw_thm}{Hasse-Weil bound}
\newtheorem*{chebotarev}{Chebotarev Density Theorem}
\newtheorem*{kronecker-weber}{Kronecker-Weber Theorem}
\newtheorem*{hensel_lem}{Hensel's Lemma}
\newtheorem{thm}{Theorem}[section]
\newtheorem{conj}[thm]{Conjecture}
\newtheorem{lem}[thm]{Lemma}
\newtheorem{prop}[thm]{Proposition}
\theoremstyle{definition}
\newtheorem{ques}[thm]{Question}
\newtheorem{ex}[thm]{Example}
\theoremstyle{remark}
\newtheorem{rem}[thm]{Remark}
\numberwithin{equation}{section}
\title{Squarefree parts of polynomial values}
\author{David Krumm}
\date{}
\begin{document}
\begin{abstract}
Given a separable nonconstant polynomial $f(x)$ with integer coefficients, we consider the set $S$ consisting of the squarefree parts of all the rational values of $f(x)$, and study its behavior modulo primes. Fixing a prime $p$, we determine necessary and sufficient conditions for $S$ to contain an element divisible by $p$. Furthermore, we conjecture that if $p$ is large enough, then $S$ contains infinitely many representatives from every nonzero residue class modulo $p$. The conjecture is proved by elementary means assuming $f(x)$ has degree 1 or 2. If $f(x)$ has degree 3, or if it has degree 4 and has a rational root, the conjecture is shown to follow from the Parity Conjecture for elliptic curves. For polynomials of arbitrary degree, a local analogue of the conjecture is proved using standard results from class field theory, and empirical evidence is given to support the global version of the conjecture. 
\end{abstract}

\maketitle

\section{Introduction}

The \textit{squarefree part} of a nonzero rational number $r$, which we denote here by $S(r)$, is the unique squarefree integer $d$ such that $r/d$ is a square in $\Q$. To any separable polynomial $f(x)\in\Z[x]$ of positive degree we associate the following set of squarefree integers:
\[\calS(f)=\{S(f(r)):r\in\Q\text{\;and\;} f(r)\ne 0\}.\]
An alternate definition of the set $\calS(f)$ can be made in more geometric terms. Let $C$ be the hyperelliptic curve defined by the equation $y^2=f(x)$, and for any squarefree integer $d$, let $C_d$ denote the quadratic twist of $C$ by $d$; i.e., the hyperelliptic curve defined by $dy^2=f(x)$. It may be the case that the curve $C_d$ has rational points at infinity or affine rational points $(x,y)$ with $f(x)=0$; we will refer to these as \textit{trivial} rational points. With this terminology,
\[\calS(f)=\{\text{squarefree}\;d : C_d\; \text{has\;a\;nontrivial\;rational\;point}\}.\]
For any prime number $p$ we consider the problem of determining which residue classes modulo $p$ are represented in the set $\calS(f)$. A simple criterion for deciding whether $\calS(f)$ contains a multiple of $p$ can be readily obtained --- see Theorem \ref{0_thm}. The case of nonzero residue classes is not nearly as simple; in fact, most of this article is devoted to studying that case. Our main goal is to provide evidence in support of the following:
 
\begin{conj}\label{main_conj}
For all but finitely many primes $p$, the set $\calS(f)$ contains infinitely many elements from every nonzero residue class modulo $p$.
\end{conj}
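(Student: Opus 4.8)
The plan is to prove the conjecture case by case according to the shape of $f$, unconditionally when $\deg f\le 2$ and (following the abstract's strategy) only conditionally or locally for higher degree; the real work in the unconditional range is the irreducible quadratic case. If $\deg f = 1$, or if $\deg f = 2$ and $f$ has a rational root, then $\calS(f)$ is in fact the set of \emph{all} squarefree integers, so the conjecture holds for every prime, even for the zero class. For example, if $f(x) = a(x-\alpha)(x-\beta)$ with $\alpha,\beta\in\Q$, substituting $x = \alpha + t(\beta-\alpha)$ and writing $t = m/n$ gives $\calS(f) = \{S(amk) : m,k\in\Z\setminus\{0\}\}$; then taking $m = 1$ and $k = a_1 e$ with $a_1 = S(a)$ produces $S(ak) = e$ for an arbitrary squarefree $e$, and the linear case is analogous. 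So assume henceforth that $f(x) = ax^2+bx+c$ is irreducible over $\Q$, and set $\Delta = b^2-4ac$, which is not a square.

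First I would pass to a conic. Writing a rational argument in lowest terms as $r/s$ and using $4a s^2 f(r/s) = (2ar+bs)^2 - \Delta s^2$, one checks that a squarefree integer $d$ lies in $\calS(f)$ if and only if the ternary conic $d e^2 = ar^2 + brs + cs^2$ has a nontrivial rational point; a rational diagonalization turns this into $d X^2 = a(Y^2 - \Delta Z^2)$. By Legendre's theorem the solvability of the latter is equivalent to finitely many explicit congruence and sign conditions --- invariantly, $d\in\calS(f)$ precisely when $d/a$ is everywhere locally a norm from $\Q(\sqrt\Delta)$, i.e.\ the Hilbert symbol $(d/a,\Delta)_v$ is trivial at every place $v$. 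The crucial observation is that when $p\nmid 2a\Delta$ --- which excludes only finitely many primes --- and $p\nmid d$, the condition at $v=p$ holds automatically, so it imposes no restriction on the residue of $d$ modulo $p$; only the places $\infty$, $2$, the primes dividing $a\Delta$, and the primes dividing $d$ remain relevant.

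Now fix such a prime $p$ and a nonzero residue $a_0$ modulo $p$. I would look for $d$ of the form $\pm q$ with $q$ a prime: any such $d$ is automatically squarefree, and I may require $q$ to lie in prescribed residue classes modulo a suitable power of $2$ times the primes dividing $a\Delta$ (forcing the conditions at $2$ and at those primes), to have a prescribed sign (the condition at $\infty$), and to satisfy $q\equiv \pm a_0\pmod p$. These requirements are supported at pairwise disjoint places and the class modulo $p$ is a unit, so Dirichlet's theorem on primes in arithmetic progressions supplies infinitely many such primes $q$. The one condition not yet imposed --- that $\Delta$ be a square modulo $q$, equivalently that $q$ split in $\Q(\sqrt\Delta)$ --- is then free of charge: by Hilbert reciprocity the product over all $v$ of $(d/a,\Delta)_v$ is trivial, and every factor other than the one at $q$ has been arranged to be trivial, so that factor is trivial too. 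Thus $\pm q\in\calS(f)$ with $\pm q\equiv a_0\pmod p$, and letting $q$ vary we get infinitely many elements of $\calS(f)$ in the class $a_0$.

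The step I expect to be the main obstacle is the bookkeeping behind the last two paragraphs: verifying that the conditions coming out of Legendre's theorem really are congruences on $d$ supported away from $p$ (so that the residue of $d$ modulo $p$ stays genuinely unconstrained), and that the congruences one then imposes on $q$ are consistent and compatible with $q$ being a large prime --- in particular that $q$ is never forced to equal $2$ or to be divisible by a prime dividing $a\Delta$. One must also track the possibly degenerate behaviour of the Hilbert symbol at $2$ and at primes dividing $a\Delta$ (where $\Delta$ can already be a local square, making the relevant condition vacuous), and observe that a prime $\ell$ dividing $a$ but not $\Delta$ satisfies $\Delta\equiv b^2\pmod\ell$, which keeps the condition at $\ell$ harmless. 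With this local analysis in hand, the conjecture for $\deg f\le 2$ rests only on Legendre's theorem, Dirichlet's theorem, and quadratic reciprocity; for $\deg f\ge 3$ one would instead study the rational points of the twists $C_d$ and their Jacobians, which is where the Parity Conjecture enters and where an unconditional argument is no longer available.
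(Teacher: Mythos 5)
Your handling of degree $1$ and of quadratics with a rational root is correct (and the observation that $\calS(f)$ is then the set of \emph{all} squarefree integers is a genuine simplification over the paper's route through prime values), but the irreducible quadratic case -- the only place where your argument has real content -- has a gap: the ansatz $d=\pm q$ with $q$ prime is not available for all separable irreducible quadratics. There exist such $f$ for which \emph{every} element of $\calS(f)$ is divisible by a fixed prime, so $\calS(f)$ contains no element $\pm q$ at all. Concretely, take $f(x)=5x^2-10=5(x^2-2)$. Since $2$ is a nonresidue modulo $5$, $\ord_5(r^2-2)$ is even for every rational $r$ (it equals $2\ord_5(r)$ if $\ord_5(r)<0$ and $0$ otherwise), hence $\ord_5(f(r))$ is always odd and $5\mid S(f(r))$ for every $r$. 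In your Hilbert-symbol bookkeeping this is precisely the degenerate case your ``harmless'' remark (which treats $\ell\mid a$, $\ell\nmid\Delta$) does not cover: here $\ell=5$ divides both $a$ and $\Delta=200=2^3\cdot 5^2$, the unit part $8$ of $\Delta$ is a nonresidue mod $5$, so $\Q_5(\sqrt\Delta)$ is the unramified quadratic extension, whose norms all have even valuation; since $\ord_5(a)$ is odd, $(da,\Delta)_5=1$ forces $\ord_5(d)$ to be odd. That condition is not a congruence on $q$ and is unsatisfiable for any $d$ that is a unit at $5$, whatever residue classes and sign you impose, so the claim ``I may require $q$ to lie in prescribed residue classes modulo $\ldots$ the primes dividing $a\Delta$, forcing the conditions at those primes'' is false as stated and the reciprocity step never gets started. (The polynomial $2x^2+2x+2$ exhibits the same failure at $\ell=2$.)

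The missing ingredient is exactly the reduction the paper performs before its quadratic argument (Lemmas \ref{odd_imp} and \ref{even_imp}, then Proposition \ref{degree2_prop}): pull out a fixed squarefree factor $\delta$ -- the squarefree part of the content, respectively of the leading coefficient -- and look for $d=\delta q$ with $q$ prime, $q\equiv\delta^{-1}m\Mod p$ and $q\equiv 1$ modulo $8$ times the odd primes dividing $a\Delta$. With the ``bad part'' of $d$ prescribed in advance so that $\ord_\ell(da)$ has the right parity at every $\ell\mid 2a\Delta$, your local analysis and the product-formula trick (which is essentially the paper's proof of statement $\mathrm P(2)$, with Hilbert reciprocity replacing the explicit quadratic-reciprocity computation and Legendre's theorem supplying the global point) do go through, and your $p\nmid 2a\Delta$ restriction matches the paper's ``good prime'' hypothesis. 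For degrees $\ge 3$ your one-sentence deferral to the Parity Conjecture and to local solvability is consistent with the paper, which likewise does not prove the conjecture there; so the only substantive repair needed is the $\delta$-reduction above.
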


The initial motivation for this work was an observation made in the process of studying quadratic points on the modular curves $Y_1(N)$. Consider, for example, the quadratic number fields $K$ such that the curve $Y_1(18)$ has a $K$-rational point\footnote[2]{Questions about quadratic points on this curve arise in connection to elliptic curves over quadratic fields, and also to dynamical properties of quadratic polynomials over quadratic fields; see \cite[\S 3.11]{jxd}.}. As shown in \cite[Thm. 2.6.5]{krumm_thesis}, if we let
\[f(x)=x^6 + 2x^5 + 5x^4 + 10x^3 + 10x^2 + 4x + 1,\]
then every such field has the form $K=\Q(\sqrt d)$ for some $d\in\calS(f)$. Moreover, every number $d\in\calS(f)$ satisfies $d\equiv 1\Mod 8$ and $d\equiv 0$ or $1\Mod 3$. In view of this result it becomes natural to ask, for any integer $n$, which residue classes modulo $n$ are represented in the set $\calS(f)$. This question can of course be asked not only for this particular polynomial $f(x)$, but for any polynomial. In this article we restrict attention to prime values of $n$ and to separable polynomials in order to simplify various arguments. Thus, we arrive at the problem of determining which residue classes modulo a prime $p$ are represented in a set of the form $\calS(f)$.

Another instance in which this problem becomes of interest is a question concerning congruent numbers. Recall that a positive integer is called \textit{congruent} if it is the area of a right triangle with rational sides. The set of congruent numbers is closely related to the set $\calS(f)$ for the polynomial $f(x)=x^3-x$. Indeed, it is easy to see that a positive integer is congruent if and only if its squarefree part is congruent, and furthermore, it is well known that a positive squarefree integer $d$ is congruent if and only if $d\in\calS(f)$; see \cite[Chap. 1]{koblitz}. The Birch and Swinnerton-Dyer conjecture would imply that $\calS(f)$ contains every positive squarefree integer $d\equiv 5, 6$ or $7\Mod 8$. This result has not been proved unconditionally, but partial results have been achieved. Notably, Monsky \cite{monsky} has shown that $\calS(f)$ contains infinitely many elements from each of the residue classes 5, 6, 7 modulo 8. Now, for any integer $n$ one might ask which residue classes modulo $n$ contain a congruent number. With regards to this question, the following is shown in \S\ref{small_degree_section}.

\begin{thm}\label{congruent_thm} 
For every prime number $p$ there exist infinitely many congruent numbers divisible by $p$. Furthermore, assuming the Parity Conjecture, every residue class modulo $p$ contains infinitely many congruent numbers.
\end{thm}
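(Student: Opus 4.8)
The plan is to exhibit a single positive squarefree congruent number divisible by $p$ and then pass to square multiples. Since a positive integer is congruent exactly when its squarefree part is, and a positive squarefree $d$ is congruent exactly when $d\in\calS(x^3-x)$, it suffices to find $r\in\Q$ with $f(r)=r^3-r>0$ and $\ord_p f(r)$ odd. The choice $r=p$ works: $f(p)=p(p-1)(p+1)>0$, and $\ord_p f(p)=1$ since $p\nmid(p-1)(p+1)$. Hence $p^3-p$ is a congruent number divisible by $p$, and the integers $k^2(p^3-p)$ for $k\ge 1$ are then infinitely many such. (This also follows from Theorem~\ref{0_thm} applied to $x^3-x$, once one checks that the multiple of $p$ it produces in $\calS(x^3-x)$ may be chosen positive.)

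\textbf{Part 2 (conditional on Parity).} The residue class $0$ is covered by Part~1, so fix a nonzero class $a\bmod p$. The strategy is to produce infinitely many \emph{primes} $n$ with $n\equiv a\pmod p$ that are congruent; each is automatically positive and squarefree, and then the numbers $k^2 n$ with $k\equiv 1\pmod p$ give infinitely many congruent numbers in the class $a$. To control congruence, move to the elliptic curve $E_n\colon y^2=x^3-n^2x$, a Weierstrass model of the twist $C_n\colon ny^2=x^3-x$. Under this model the trivial points of $C_n$ form the subgroup $E_n[2]\cong(\Z/2\Z)^2$, which for squarefree $n$ is the full torsion subgroup of $E_n$ (\cite[Chap.~1]{koblitz}); thus $n\in\calS(x^3-x)$, i.e.\ $n$ is congruent, if and only if $E_n$ has positive Mordell--Weil rank, and by the Parity Conjecture for quadratic twists this holds whenever the global root number is $w(E_n)=-1$.

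The arithmetic input is that $n\mapsto w(E_n)$ is, on squarefree integers, locally constant: $y^2=x^3-x$ has good reduction away from $2$, so the local root number at an odd prime $q\mid n$ is $\left(\frac{-1}{q}\right)$, and the product of these over the odd part of $n$ depends only on $n\bmod 8$ while the factor at $2$ depends only on $n$ modulo a fixed power of $2$; explicitly $w(E_n)=-1$ for every squarefree $n\equiv 5,6,7\pmod 8$ (\cite[Chap.~1]{koblitz}; cf.\ \cite{monsky}). Now let $p$ be odd. The conditions $n\equiv a\pmod p$ and $n\equiv 5\pmod 8$ are compatible and define a progression $n\equiv c\pmod{8p}$ with $\gcd(c,8p)=1$, so by Dirichlet's theorem it contains infinitely many primes; each such $n$ has $w(E_n)=-1$, hence positive rank, hence is a congruent number $\equiv a\pmod p$. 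For $p=2$ the same argument applied to primes $n\equiv 5\pmod 8$ yields infinitely many \emph{odd} congruent numbers, covering the class $1\bmod 2$.

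\textbf{Main obstacle.} Every step except the root number statement is routine (CRT, Dirichlet's theorem, the determination of $E_n(\Q)_{\mathrm{tors}}$, and the elementary Part~1); the substantive ingredient is the local root number computation at $2$ and at the odd prime divisors of $n$. Within this paper that computation is exactly the $f(x)=x^3-x$ instance of the root-number analysis underpinning the general degree-$3$ theorem. What makes the congruent number case unusually clean is that $E_n$ depends only on $n^2$, so no sign of the twisting parameter needs to be pinned down — the congruent numbers produced come out positive automatically.
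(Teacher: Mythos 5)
Your argument is correct, but it takes a genuinely different route from the paper's proof. For the unconditional part the paper applies its Theorem~\ref{0_thm} (whose infinitude rests on Siegel's theorem) together with the symmetry $f(-x)=-f(x)$ to produce infinitely many \emph{positive squarefree} elements of $\calS(x^3-x)$ divisible by $p$; your explicit witness $p^3-p$ together with square multiples is simpler and suffices for the statement as phrased, at the cost of producing only one squarefree part. For the conditional part the paper stays entirely inside its general framework: Lemma~\ref{elliptic_lem} twists only by $d=\epsilon q$ with $q\equiv\epsilon\pmod{8p_1\cdots p_v}$, which forces $\chi_d(-N_E)=\mathrm{sign}(d)$, so no local root-number computation is ever needed; since $y^2=x^3-x$ has rank $0$ the twists produced are negative, and positivity is recovered from the oddness of $x^3-x$ (the construction is run for the class $-m$ and the output negated). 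You instead invoke the classical Birch--Stephens/Koblitz root-number formula $w(E_n)=-1$ for positive squarefree $n\equiv 5,6,7\pmod 8$ and take primes $n\equiv a\pmod p$, $n\equiv 5\pmod 8$; combined with Dirichlet, the torsion fact, and parity this is a complete and transparent proof for this particular curve. Two caveats: first, that root-number formula is \emph{not} ``the $x^3-x$ instance of the analysis underpinning the degree-$3$ theorem''---the paper's Lemma~\ref{elliptic_lem} is designed precisely to avoid computing local root numbers---so your proof genuinely relies on an external classical input (and it lives in Koblitz's chapters on the $L$-function, not Chapter~1, which supplies only the torsion statement); alternatively you could stay within the paper's stated Parity Conjecture, since for positive $n\equiv 5\pmod 8$ coprime to $2N_E=64$ one has $\chi_n(-32)=\chi_n(-1)\chi_n(2)=-1$ and $\mathrm{rank}\,E(\Q)=0$, giving odd rank for $E_n$ directly. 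Second, the paper settles $p=2$ unconditionally via Monsky, whereas your treatment of the odd class modulo $2$ is conditional; this is still within the theorem's hypotheses, just slightly weaker. The final remark about $k^2n$ with $k\equiv 1\pmod p$ is harmless but unnecessary, since the primes $n$ already give infinitely many congruent numbers in the class.
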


For polynomials of small degree we can give the best evidence supporting our main conjecture. The next result summarizes our work in \S\ref{small_degree_section}.

\begin{thm}\label{14_thm}
Conjecture \ref{main_conj} holds unconditionally if $f(x)$ has degree 1 or 2.  If $f(x)$ has degree 3, or if it has degree 4 and has a rational root, then the conjecture follows from the Parity Conjecture.
\end{thm}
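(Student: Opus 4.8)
I would proceed case by case, in each case reducing the assertion to producing infinitely many squarefree integers with prescribed local behavior in a given residue class modulo $p$. When $\deg f=1$, the map $f$ is a bijection of $\Q$ onto itself, so $\calS(f)$ is the entire set of squarefree integers and there is nothing to prove beyond the elementary fact that every nonzero class modulo $p$ contains infinitely many squarefree integers. When $\deg f=2$, let $D\neq 0$ be the discriminant of $f$; completing the square and removing square factors gives $\calS(f)=\{S(a(t^2-D)):t\in\Q,\ t^2\neq D\}$. If $D$ is a square then $f$ splits over $\Q$ and the substitution carrying one root to infinity reduces matters to the linear case. If $D$ is not a square, then after fixing the sign of $d$ suitably and discarding one exceptional value, $d\in\calS(f)$ holds if and only if the conic $at^2-ds^2-aDz^2=0$ has a nontrivial rational point; by the theorem of Legendre this amounts to a finite set of congruence conditions on $d$ modulo the primes dividing $2aD$, the conditions at the primes dividing $d$ itself being automatic by quadratic reciprocity. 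Taking $d=\pm\ell$ for a prime $\ell$ chosen by Dirichlet's theorem in a suitable class modulo $8aD$ and with $\ell\equiv\pm a_0\Mod p$ then finishes the quadratic case.

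For $\deg f=3$ the curve $C\colon y^2=f(x)$ is an elliptic curve $E$, and $C_d$ is its quadratic twist $E^{(d)}$; the trivial rational points of $C_d$ form a fixed finite subgroup of $E^{(d)}(\Q)$ --- the identity together with the $2$-torsion coming from the rational roots of $f$ --- so as soon as $E^{(d)}$ has positive rank we get $d\in\calS(f)$. Granting the Parity Conjecture, it therefore suffices to produce, for all large $p$ and all $a_0\not\equiv 0\Mod p$, infinitely many squarefree $d\equiv a_0\Mod p$ whose global root number satisfies $w(E^{(d)})=-1$, for then $E^{(d)}$ has odd rank. Writing $N$ for the conductor of $E$, the classical formula for the root number of a quadratic twist expresses $w(E^{(d)})$, when $\gcd(d,2N)=1$, as $w(E)$ times the value at $-N$ of the quadratic character cutting out $\Q(\sqrt d)$. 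I would choose a residue class for $d$ modulo $4N$ and a sign making this value equal to $-w(E)$, intersect it with the class of $a_0$ modulo $p$ via the Chinese Remainder Theorem, and note that the resulting progression still contains infinitely many squarefree integers.

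Finally, if $\deg f=4$ and $\alpha\in\Q$ is a root of $f$, write $f(x)=(x-\alpha)g(x)$ with $g$ a separable cubic; separability of $f$ forces $g(\alpha)\neq 0$. The substitution $x=\alpha+1/u$ gives $f(\alpha+1/u)=u^{-4}\tilde g(u)$ with $\tilde g(u)=u^3g(\alpha+1/u)$ a separable cubic, so $\calS(f)$ and $\calS(\tilde g)$ differ in at most one element and the conclusion follows from the cubic case.

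I expect the crux to be the cubic case, and within it the description of how $w(E^{(d)})$ varies with $d$: one has to incorporate the local root numbers at the primes of bad reduction of $E$ and at the archimedean place, and then check that the congruence class singled out by this computation is actually compatible with the prescribed class modulo $p$ and still hits the squarefree integers infinitely often. The quadratic case involves a parallel but more hands-on interplay among Legendre's criterion, quadratic reciprocity, and Dirichlet's theorem; the degree-four-with-a-root case and the linear case are essentially just reductions.
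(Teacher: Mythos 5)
Your treatments of degrees $1$, $3$, and $4$ are sound, but the quadratic case as written has a genuine gap. You propose to take $d=\pm\ell$ with $\ell$ prime, chosen by Dirichlet in a suitable class modulo $8aD$ and congruent to the target residue modulo $p$, asserting that Legendre's criterion reduces membership of $d$ in $\calS(f)$ to congruence conditions on $d$ modulo the primes dividing $2aD$ which such a class can satisfy. In general no such class exists: the local conditions at primes dividing the coefficients may force a fixed prime to divide every element of $\calS(f)$, so that $\calS(f)$ contains no $\pm\ell$ at all. Concretely, take $f(x)=3x^2+3$, which is separable with $D=-36$. Writing $r=u/v$ in lowest terms, $f(r)=3(u^2+v^2)/v^2$, and since $u^2+v^2\equiv 0\Mod 3$ forces $3\mid u$ and $3\mid v$, the $3$-adic valuation of every nonzero value $f(r)$ is odd; hence every $d\in\calS(f)$ is divisible by $3$, and your recipe produces integers $d=\pm\ell$ that are simply not in $\calS(f)$. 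The missing idea is a fixed auxiliary squarefree twist absorbing these forced local square classes: this is exactly why the paper first reduces the statement for arbitrary quadratics to one for polynomials with square leading coefficient, i.e.\ sets $\delta$ equal to the squarefree part of the leading coefficient, works with $\delta\cdot f$, and seeks elements of the form $d=\delta q$ with $q$ prime, $q\equiv\delta^{-1}m\Mod p$ and $q\equiv 1$ modulo $8$ times the odd part of the discriminant. After that reduction your intended combination of Legendre's theorem, quadratic reciprocity, and Dirichlet is precisely the paper's argument (there one diagonalizes to $x^2-\delta y^2$ and checks the Legendre conditions for $x^2-\delta y^2-qz^2=0$); without it, the step ``this amounts to satisfiable congruence conditions modulo $2aD$'' fails.

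The remaining cases are fine. For degree $1$ your observation that $f$ is a bijection of $\Q$, so $\calS(f)$ is the full set of squarefree integers, is simpler than (and implies what is needed from) the paper's argument, which instead produces prime values in prescribed classes. Your degree $3$ argument is essentially the paper's: choose the sign and the class of $d$ modulo $4N$ (equivalently, take $d=\epsilon q$ with $q\equiv 1$ modulo $8$ times the odd part of $N$) so that $\chi_d(-N)$ flips the parity, then note that trivial points are the identity and rational $2$-torsion, so odd rank yields nontrivial points; you should justify the existence of infinitely many squarefree integers in the resulting progression (the paper sidesteps this by using prime twists). For degree $4$ with a rational root $\alpha$ you take a genuinely different route from the paper: the substitution $x=\alpha+1/u$ reduces $\calS(f)$, up to one element, to $\calS$ of a separable cubic, whereas the paper keeps the quartic model, uses the rational Weierstrass point to identify each twist $C_d$ with its Jacobian, and applies the twist-parity lemma to $\Jac(C)$. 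Your reduction is valid and arguably more elementary; just note that the resulting cubic has rational, not necessarily integral, coefficients, which is repaired by replacing $\tilde g(u)$ with $N^4\tilde g(u/N)\in\Z[u]$ for suitable $N$ (a rational-square rescaling, so the set of squarefree parts is unchanged), and that this reduction only disturbs finitely many primes $p$, which is harmless for the conjecture.
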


For polynomials of higher degree we can only show that there is no local obstruction to the conjecture. Letting $C$ denote the hyperelliptic curve $y^2=f(x)$, the conjecture can be rephrased as follows: if $p$ is a large enough prime, then for every integer $m$ coprime to $p$ there exist infinitely many squarefree integers $d$ such that $d\equiv m\Mod p$ and $C_d$ has a nontrivial rational point. Collecting the main results of \S\ref{local_section} we obtain the following local version of the conjecture.

\begin{thm}
Suppose that $f(x)$ has odd degree, or that it has even degree and some irreducible divisor of $f(x)$ has abelian Galois group. Then there is an effectively computable constant $N$ such that the following holds for every prime $p\ge N$: if $m$ is any integer coprime to $p$, then there exist infinitely many squarefree integers $d\equiv m \;(\mathrm{mod}\; p)$  such that the curve $C_d$ has a nontrivial point over every completion of $\Q$.
\end{thm}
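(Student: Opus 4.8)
\emph{Strategy and reductions.} The plan is to reduce everywhere-local solubility of $C_d$ to a finite list of conditions which can then be met simultaneously while keeping $d$ in the prescribed class modulo $p$. At the real place one checks that $C_d$ has a nontrivial point exactly when $d$ has the same sign as some nonzero value of $f$, so I will simply require $\operatorname{sgn}(d)=\operatorname{sgn}(\operatorname{lc}(f))$. For the finite places, a point count via the Hasse--Weil bound shows that, for all primes $\ell$ outside an effectively computable finite set and with $\ell\nmid d$, the affine curve $dy^2=f(x)$ has an $\F_\ell$-point with $y\ne 0$; this point is smooth and lifts by Hensel's Lemma to a nontrivial $\Ql$-point. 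Collect these exceptional primes, together with $2$, the primes dividing $\operatorname{disc}(f)\operatorname{lc}(f)$, and (when $\deg f$ is even) the primes ramifying in $F:=\Q[x]/(g)$, into a finite effectively computable set $B$, and set $N:=1+\max B$. The remaining task, for $p\ge N$, is to produce infinitely many squarefree $d\equiv m\Mod p$ of the right sign for which $C_d$ is soluble at each prime of $B$ and at each prime dividing $d$.

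\emph{The odd-degree case.} Here I would use the Weierstrass point at infinity. With $n=\deg f$, the substitution $x=1/s$, $y=w/s^{(n+1)/2}$ turns $dy^2=f(x)$ into $dw^2=s\,\phi(s)$, where $\phi(s)=s^{n}f(1/s)$ satisfies $\phi(0)=\operatorname{lc}(f)\ne 0$, and $(s,w)=(0,0)$ is a smooth point. Given any odd prime $\ell\nmid\operatorname{lc}(f)$, I can pick $s$ with $v_\ell(s)$ of the parity making $v_\ell(s\phi(s))-v_\ell(d)$ even, and then adjust the unit part of $s$ so that $s\phi(s)/d$ is a square in $\Ql$; since $f(1/s)=\phi(s)/s^{n}\ne 0$, the point obtained is nontrivial. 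Hence $C_d$ is soluble at every odd $\ell\nmid\operatorname{lc}(f)$ — in particular at every prime outside $B$ — for every squarefree $d$. Solubility at the primes of $B$ depends only on the square class of $d$ in $\Ql^{\times}$, and since the class of $f(x_0)$ is soluble whenever $f(x_0)\ne 0$, there is a residue $r$ modulo $M_B:=16\prod_{\ell\in B\setminus\{2\}}\ell^{2}$, with $v_\ell(r)\le 1$ for all $\ell\mid M_B$, such that every $d\equiv r\Mod{M_B}$ is soluble on $B$. Combining $d\equiv r\Mod{M_B}$ with $d\equiv m\Mod p$ by the Chinese Remainder Theorem and using that such a progression contains infinitely many squarefree integers completes this case.

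\emph{The even-degree case.} Now the point at infinity produces only values of $f$ of even valuation, so at a prime dividing $d$ one truly needs $f$ to have a root there. This is where the abelian hypothesis enters: by the Kronecker--Weber Theorem $F=\Q[x]/(g)$ lies in some $\Q(\zeta_M)$, so ``$g$ has a root modulo $\ell$'' becomes the congruence ``$\ell\bmod M\in H$'' for a fixed subgroup $H\le(\Z/M\Z)^{\times}$. I would take $d=d_1\ell_1\ell_2$, where $d_1$ is a fixed squarefree integer supported on $B$ (chosen so that for each $\ell'\in B$ the valuation $v_{\ell'}(d_1)$ has the parity of some soluble square class at $\ell'$), and $\ell_1,\ell_2$ are large primes with $\ell_i\bmod M\in H$. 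For such $\ell_i$, taking a root $a$ of $g$ mod $\ell_i$ with Hensel lift $\alpha\in\Z_{\ell_i}$ and an integer $\tilde a\equiv a\Mod{\ell_i}$ with $v_{\ell_i}(\tilde a-\alpha)=1$, one gets $v_{\ell_i}(f(\tilde a))=1$ and $f(\tilde a)/\ell_i\equiv f'(a)\,\beta\Mod{\ell_i}$ with $\beta=(\tilde a-\alpha)/\ell_i$; as $\beta\bmod\ell_i$ may be chosen square or nonsquare, I can force $f(\tilde a)/\ell_i$ into either square class, hence match $\chi_{\ell_i}(d/\ell_i)$ and obtain a nontrivial $\Q_{\ell_i}$-point. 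Thus solubility at $\ell_1,\ell_2$ costs only the congruences $\ell_i\bmod M\in H$; the square-class conditions at $B$ become congruences on $\ell_1\ell_2$ (their unit parts being free), and $d\equiv m\Mod p$ becomes $d_1\ell_1\ell_2\equiv m\Mod p$. Fixing an admissible $\ell_1$ and letting $\ell_2$ vary, Dirichlet's theorem supplies infinitely many admissible pairs, hence infinitely many $d$.

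\emph{The main obstacle.} The step I expect to be delicate is this final amalgamation: the congruence on $\ell_2$ forced by the square classes at $B$ and the congruence $\ell_2\bmod M\in H$ are not to coprime moduli — the primes ramifying in $F$ all lie in $B$ — so one must check they are jointly satisfiable. The abelian hypothesis is exactly what keeps this in hand, since it presents the splitting of $g$ as a cyclotomic congruence rather than a Chebotarev condition with uncontrolled overlap of ramification; still, confirming compatibility will require keeping some slack, either in the choice of $d_1$ or by allowing more auxiliary prime factors of $d$. (When $\deg g=1$ this is vacuous: $F=\Q$, $M=1$, and one is essentially back in the odd-degree situation after moving the rational root of $f$ to infinity.) Finally, $N$ is effective because it is built only from the Hasse--Weil bound and from $\operatorname{disc}(f)$, $\operatorname{lc}(f)$, and $M$, all computable from $f$.
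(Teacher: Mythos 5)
Your odd-degree half is essentially correct and runs parallel to the paper's Theorem \ref{local_result_odd}: your point-at-infinity computation at odd $\ell$ not dividing the leading coefficient is a mild variant of Case 1 of Lemma \ref{local_lem}, the Hasse--Weil/Hensel step is the same, and your appeal to squarefree integers in an arithmetic progression of the right sign replaces the paper's construction $d=\delta q$ with $q$ an auxiliary Dirichlet prime; both work.

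The even-degree half, however, has a genuine gap, and it sits exactly at the step you yourself flag as ``the main obstacle.'' You need an auxiliary prime (your $\ell_2$) satisfying simultaneously (i) the splitting condition $\ell_2\bmod M\in H$, (ii) prescribed residues modulo the primes of $B$ so that the square class of $d=d_1\ell_1\ell_2$ is soluble at each place of $B$, and (iii) a residue modulo $p$. Since the primes ramifying in $F$ lie in $B$ and divide $M$, conditions (i) and (ii) constrain $\ell_2$ modulo common primes and are not obviously jointly satisfiable: for instance, if $\Q(\sqrt{\ell'})\subseteq F$ for some odd $\ell'\in B$, then (i) forces $\ell_2$ to be a quadratic residue modulo $\ell'$, which can directly contradict the residue demanded by (ii) for a badly chosen $d_1$. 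Saying that ``keeping some slack'' will fix this is not a proof; arranging the compatibility is the actual content of the even-degree case. The paper resolves it by not trying to hit an arbitrary element of $H$: it uses a single auxiliary prime $q$ with $q\equiv 1$ modulo $8b\ell_1\cdots\ell_r$ (all primes below the bound together with the Kronecker--Weber conductor $b$), so the splitting condition is automatic because $1\in H$ (this is Lemma \ref{roots_modq} applied with $a\equiv 1\Mod{8b\ell_1\cdots\ell_r}$ and $a\equiv\delta^{-1}m\Mod p$), and it takes $d=\delta q$ with $\delta$ the squarefree part of a fixed value $f(t)$. Then $d\cdot f(t)=q\cdot\bigl(\delta f(t)\bigr)$ with $\delta f(t)$ a perfect square, so solubility at every small prime reduces to $q$ being a square in $\Q_2,\Q_{\ell_1},\ldots,\Q_{\ell_r}$, which the congruence $q\equiv 1\Mod{8\ell_1\cdots\ell_r}$ guarantees; the only nontrivial congruence left, $d\equiv m\Mod p$, is imposed modulo $p$, which is coprime to all the other moduli because $p\ge N$. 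In your framework this amounts to taking $d_1=\delta$ and requiring the auxiliary prime(s) to be $\equiv 1$ modulo $8M\prod_{\ell'\in B}\ell'$ (one auxiliary prime suffices; the second is unnecessary). Without some such explicit choice, your amalgamation step does not go through, so as written the even-degree case is incomplete.
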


\begin{rem}
We expect that the Galois group condition imposed when $f(x)$ has even degree is unnecessary, although our arguments do make essential use of this assumption.
\end{rem}

As an improvement to Conjecture \ref{main_conj} it would be desirable to have a precise quantitative statement about how the elements of $\calS(f)$ are distributed among the residue class modulo $p$. Given a prime $p$ and an integer $m$, we consider the function
\[D(t)=\#\{d\in \calS(f): |d|\le t\;\text{and}\; d\equiv m\Mod p\},\]
and ask what the asymptotic behavior of $D(t)$ is as $t\to\infty$. A similar question has been studied by other authors, but without restricting $d$ to any particular residue class: Stewart-Top \cite[Thm. 2]{stewart-top} show that
 \[\#\{d\in \calS(f): |d|\le t\}\gg t^{1/(g+1)}/(\log t)^2,\]
 where $g$ is the genus of the hyperelliptic curve $y^2=f(x)$, and Granville \cite[Conj. 1.3]{granville} conjectures that if $g\ge 2$, then there is a positive constant $c_f$ such that
 \[\#\{d\in \calS(f): |d|\le t\}\sim c_f\cdot t^{1/(g+1)}.\]
Another useful function to consider is defined as follows. Recall that the \textit{height} of a rational number $r=a/b$ in lowest terms is the number $H(r)=\max\{|a|,|b|\}$. For any real number $t\ge 1$, let
\[\calS(f,t)=\{S(f(r)): f(r)\ne 0 \text{\;and\;} H(r)\le t \}.\]
With $p$ and $m$ as above, we then define
\[E(t)=\#\{d\in \calS(f,t): d\equiv m\Mod p\}\]
and ask how this function behaves as $t\to\infty$. A related question was answered by Poonen in \cite[Thm. 3.5]{poonen}, though again disregarding residue classes: under the assumption of the abc conjecture, if $f(x)$ has degree $\ge 2$, then
\[\#\{S(f(1)), S(f(2)),\ldots, S(f(t))\}\sim t.\] 
If results analogous to those of Stewart-Top or Poonen could be obtained for the functions $D(t)$ or $E(t)$, we would have a proof (perhaps conditional on the abc conjecture) and a more precise version of Conjecture \ref{main_conj}. However, it is not clear whether the methods of the these authors can be modified to yield asymptotics for our counting functions.

This article is organized as follows. In \S\ref{0_class_section} we give necessary and sufficient conditions for the set $\calS(f)$ to contain an element divisible by $p$. A discussion of Conjecture \ref{main_conj} for polynomials of degrees 1-4 is given in \S\ref{small_degree_section}. In \S\ref{local_section} we prove a local analogue of the conjecture for polynomials of arbitrary degree. Finally, \S\ref{data_section} contains empirical evidence supporting Conjecture \ref{main_conj}.

\begin{rem}
Part of the material in this article appears in the author's Ph.D. thesis \cite{krumm_thesis}.
\end{rem}

\section{The residue class of 0}\label{0_class_section}

Let $f(x)\in\Z[x]$ be a nonconstant separable polynomial. We show in this section how to decide, for most primes $p$, whether the residue class of 0 modulo $p$ is represented in the set $\calS(f)$. This particular residue class is easier than others to deal with because there is a simple criterion to decide whether $p$ divides the squarefree part of a nonzero rational number $r$. Indeed, we have $p|S(r)$ if and only if $\ord_p(r)$ is odd. Here, $\ord_p$ denotes the standard $p$-adic valuation.

It will be convenient from this point on to distinguish between two types of primes. Throughout this article we say that an odd prime $p$ is \textit{good} for $f(x)$ if the reduced polynomial $\bar f(x)\in\F_p[x]$ has the same degree as $f(x)$ and has nonzero discriminant. This condition on $p$ implies that the hyperelliptic curve $y^2=f(x)$ has good reduction modulo $p$; see \cite[p. 464, Ex. 1.26]{liu}.

\begin{lem}\label{roots_modq_lem}
Let $R(f)$ be the set of all primes $q$ that are good for $f(x)$ and such that $f(x)$ has a root modulo $q$. Then $R(f)$ is an infinite set.
\end{lem}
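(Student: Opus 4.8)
The statement to be proved is that $R(f)$ is infinite, where $q\in R(f)$ means that $q$ is good for $f(x)$ and that $\bar f(x)\in\F_q[x]$ has a root. The plan is to decouple the two requirements: first exhibit infinitely many primes $q$ modulo which $f(x)$ has a root, and then observe that only finitely many primes fail to be good for $f(x)$, so that all but finitely many of the primes just produced in fact lie in $R(f)$.

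For the first step I would let $L$ be the splitting field of $f(x)$ over $\Q$ and set $G=\Gal(L/\Q)$. By the Chebotarev Density Theorem applied to $L/\Q$, the primes that split completely in $L$ have density $1/|G|>0$, so there are infinitely many of them. Fix such a prime $q$ and suppose in addition that $q$ is good for $f(x)$; then $q$ does not divide the leading coefficient $a$ of $f(x)$, and $q$ is unramified in $L$. The monic integral polynomial $a^{\deg f-1}f(x/a)$ has as its roots the numbers $a\alpha_i$, where the $\alpha_i$ are the roots of $f$, so all the $a\alpha_i$ lie in $\o_L$; reducing them modulo a prime of $\o_L$ above $q$, whose residue field is $\F_q$ since $q$ splits completely, shows that $\bar f(x)$ has a root in $\F_q$. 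Thus every prime that splits completely in $L$ and is good for $f(x)$ lies in $R(f)$.

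It remains to check that discarding the non-good primes does not destroy infinitude. Since $f(x)$ is separable we have $\operatorname{disc}(f)\ne 0$, so the primes that are not good for $f(x)$ --- namely $2$, the primes dividing $a$, and the primes dividing $\operatorname{disc}(f)$ --- form a finite set. Removing this finite set from the infinite set of primes that split completely in $L$ leaves infinitely many primes in $R(f)$, as desired. If one wishes to avoid Chebotarev, the existence of infinitely many primes modulo which $f(x)$ has a root also follows from a classical elementary argument: if $f(0)=0$ the reduction has $0$ as a root modulo every prime, and otherwise, assuming that only $p_1,\dots,p_k$ divide the integer values of $f(x)$, one evaluates $f(x)$ at integer multiples $Mt$ of $M=f(0)\,p_1\cdots p_k$ and notes that $f(Mt)/f(0)$ is an integer congruent to $1$ modulo $p_1\cdots p_k$ while $|f(Mt)|\to\infty$, forcing a prime factor outside $\{p_1,\dots,p_k\}$. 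Either way the argument is short, and I do not anticipate a genuine obstacle; the only point requiring attention is simply the bookkeeping fact that being good for $f(x)$ is a cofinite condition on primes, so imposing it does not interfere with the positive-density set of primes modulo which $f(x)$ has a root.
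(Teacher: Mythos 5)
Your proposal is correct and follows essentially the same route as the paper: the Chebotarev Density Theorem (applied to the splitting field) yields infinitely many primes modulo which $f(x)$ has a root, and since being good for $f(x)$ excludes only the finitely many primes dividing $2\cdot a\cdot\operatorname{disc}(f)$, infinitely many of those primes lie in $R(f)$. You simply spell out the details that the paper delegates to a citation (and add an optional elementary alternative), so no further changes are needed.
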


\begin{proof}
The Chebotarev Density Theorem implies that there are infinitely many primes $q$ such that $f(x)$ has a root modulo $q$. (See \cite[Thm. 2]{berend-bilu}, for instance.) The result now follows by noting that all but finitely many primes are good for $f(x)$.
\end{proof}

\begin{lem}\label{roots_lem_aux}
 For any finite subset $T$ of $R(f)$ there exists $d\in\calS(f)$ such that $d$ is divisible by every prime in $T$.
\end{lem}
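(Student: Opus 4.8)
The plan is to reduce the statement to a purely local question at each prime of $T$ and then glue the local solutions together via the Chinese Remainder Theorem. Concretely, I want to produce a single rational number $r$ with $f(r)\ne 0$ such that $\ord_q(f(r))$ is odd for every $q\in T$; since $q\mid S(f(r))$ precisely when $\ord_q(f(r))$ is odd, the integer $d=S(f(r))$ will then lie in $\calS(f)$ and be divisible by every prime in $T$. If $T$ is empty there is nothing to prove, so assume $T=\{q_1,\dots,q_k\}$ with $k\ge 1$.

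First I would pin down the local picture at a single $q\in T$. Since $q$ is good for $f(x)$, the reduction $\bar f\in\F_q[x]$ has nonzero discriminant, hence is separable, so every root of $\bar f$ is simple; in particular the root modulo $q$ promised by the condition $q\in R(f)$, call it $a_q$, satisfies $\bar f'(a_q)\ne 0$. By Hensel's Lemma this lifts to a root $\alpha_q\in\Z_q$ of $f$ with $f'(\alpha_q)\in\Z_q^\times$. Factoring $f(x)=(x-\alpha_q)\,g_q(x)$ over $\Z_q$ and differentiating shows $g_q(\alpha_q)=f'(\alpha_q)$ is a unit, and therefore $g_q(r)\in\Z_q^\times$ for every integer $r\equiv\alpha_q\Mod q$. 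For such $r$ one obtains
\[\ord_q(f(r))=\ord_q(r-\alpha_q)+\ord_q(g_q(r))=\ord_q(r-\alpha_q).\]
The upshot is that I can force $\ord_q(f(r))=1$ simply by arranging that $r$ be congruent to $\alpha_q$ modulo $q$ but not modulo $q^2$.

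Next I would assemble a global $r$. For each $q\in T$ choose an integer $b_q$ with $b_q\equiv\alpha_q\Mod{q^2}$; since the moduli $q_1^2,\dots,q_k^2$ are pairwise coprime, the Chinese Remainder Theorem produces an integer $r$ with $r\equiv b_q+q\Mod{q^2}$ for every $q\in T$. Then $r-\alpha_q\equiv q\Mod{q^2}$, so $\ord_q(r-\alpha_q)=1$ and hence $\ord_q(f(r))=1$ for every $q\in T$; in particular $f(r)\ne 0$. Setting $d=S(f(r))$ then finishes the argument: $d\in\calS(f)$, and $\ord_q(d)=1$ for all $q\in T$, so $\prod_{q\in T}q$ divides $d$.

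I do not expect a genuine obstacle here — the argument is essentially Hensel's Lemma followed by the Chinese Remainder Theorem — but the step that requires care is the exact evaluation of $\ord_q(f(r))$. Because $\alpha_q$ is only a $q$-adic integer one cannot literally take $r=\alpha_q$, and it is the factorization $f(x)=(x-\alpha_q)g_q(x)$ over $\Z_q$, together with the fact that $g_q(\alpha_q)$ is a unit (which is exactly where the hypotheses that $q$ is good for $f(x)$ and that the root is simple get used), that lets one conclude the valuation is controlled solely by $\ord_q(r-\alpha_q)$. The rest is bookkeeping with congruences.
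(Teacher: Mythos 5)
Your proof is correct, but it takes a genuinely different route from the paper's. The paper never leaves $\Z$: it forms the set $A$ of integers $n$ with $f(n)\ne 0$, $t\mid f(n)$ and $\gcd(t,f'(n))=1$ (nonempty by CRT, using that good primes force simple roots mod $q$), and then, if some valuations $\ord_q(f(n))$ come out even, it repairs them by replacing $n$ with $n+v$ for a carefully chosen $v=\prod q^{e_q}$, using the Taylor expansion $f(n+v)=f(n)+f'(n)v+zv^2$ so that the linear term dominates with odd valuation at every $q\in T$. You instead pass to $\Z_q$: Hensel-lift the simple root to $\alpha_q\in\Z_q$ with $f'(\alpha_q)\in\Z_q^\times$, factor $f(x)=(x-\alpha_q)g_q(x)$ over $\Z_q$ with $g_q(\alpha_q)=f'(\alpha_q)$ a unit, and then choose $r$ by CRT with $r-\alpha_q\equiv q\pmod{q^2}$, which pins down $\ord_q(f(r))=1$ exactly (in particular $f(r)\ne 0$) rather than merely odd. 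Your argument is shorter and avoids the paper's case analysis and parity-correction step, at the cost of invoking Hensel's Lemma and $q$-adic integers (which the paper only brings in later, in \S\ref{local_section}); the paper's version is more elementary and its intermediate statement is what gets recycled as Lemma \ref{roots_lem_oneprime}. Note that your construction recovers that lemma too, since $r\equiv\alpha_q\pmod q$ gives $f'(r)\equiv f'(\alpha_q)\not\equiv 0\pmod q$ alongside $\ord_q(f(r))=1$, so nothing downstream is lost.
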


\begin{proof}
Let $t$ be the product of all the primes in $T$, and let
\[A=\{n\in\Z: f(n)\ne 0,\; t|f(n), \;\text{and}\; \gcd(t,f'(n))=1\}.\] 
We claim that $A$ is nonempty. By definition of $R(f)$, for every prime $q\in T$ there is an integer $n_q$ such that $q|f(n_q)$. Moreover, since $q$ is good for $f(x)$, every root of $f(x)$ modulo $q$ must be a simple root; hence, $q$ does not divide $f'(n_q)$. Let $n$ be an integer satisfying $n\equiv n_q\Mod q$ for all $q\in T$, and $f(n)\ne 0$. (The existence of $n$ is guaranteed by the Chinese Remainder Theorem.) For every $q\in T$ we have
\[f(n)\equiv f(n_q)\equiv 0\Mod q \;\;\text{and}\;\;  f'(n)\equiv f'(n_q)\not\equiv 0\Mod q.\]
Therefore $n\in A$, proving that $A$ is nonempty. We partition $A$ into subsets $B$ and $C$ defined by
\[B=\{n\in A: \ord_q(f(n))\equiv 1\Mod 2\;\text{for all}\;q\in T\}\]
and $C=A\setminus B$. Note that if $n\in B$, then the number $d=S(f(n))$ is divisible by every prime in $T$. Hence, the proof will be complete if we show that $B$ is nonempty. Since $A$ is nonempty, at least one of $B$ and $C$ must be nonempty. Assuming $C$ is nonempty, we will show that $B$ must also be nonempty, and this will conclude the proof.

Given $n\in C$, let $W\subseteq T$ consist of all primes $q\in T$ such that $\ord_q(f(n))$ is even. For every $q\in W$, write $\ord_q(f(n))=2s_q$ with $s_q\ge 1$. For every prime $q\in T\setminus W$, set $\ord_q(f(n))=r_q$, which is an odd positive integer. Define $v\in \Z$ by the formula
\[v=\prod_{q\in W}q^{2s_q-1}\cdot\prod_{q\in T\setminus W}q^{r_q+1}.\]
Using a Taylor expansion we see that 
\[f(n+v)=f(n)+f'(n)\cdot v+z\cdot v^2\]
 for some integer $z$. Now, for every $q\in W$ we have 
 \[\ord_q(f(n))=2s_q \;\;\;\text{and}\;\;\;  \ord_q(z\cdot v^2)\ge 2\ord_q(v)=4s_q-2\ge 2s_q.\]
 Moreover, by definition of $A$, $q$ does not divide $f'(n)$, and so
 \[\ord_q(f'(n)\cdot v)=\ord_q(v)=2s_q-1<2s_q.\] 
 Therefore, $\ord_q(f(n+v))=2s_q-1$ is odd. By a similar argument we see that for primes $q\in T\setminus W$, $\ord_q(f(n+v))=r_q$ is also odd. Thus, $f(n+v)$ has odd and positive valuation at every prime in $T$. Finally, for every $q\in T$, $f'(n+v)$ is congruent to $f'(n)$ modulo $q$, and is therefore not divisible by $q$. We conclude that $n+v\in B$, showing that $B$ is nonempty.
\end{proof}

We record a consequence of the proof of Lemma \ref{roots_lem_aux} for use in a later section.

\begin{lem}\label{roots_lem_oneprime}
Let $q$ be a good prime for $f(x)$ such that $f(x)$ has a root modulo $q$. Then there exists an integer $n$ such that $\mathrm{ord}_q(f(n))$ is odd and $q$ does not divide $f'(n)$.
\end{lem}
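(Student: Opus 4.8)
The plan is to obtain this statement as the $|T| = 1$ instance of the construction carried out in the proof of Lemma~\ref{roots_lem_aux}. Since $q$ is good for $f(x)$ and $f(x)$ has a root modulo $q$, we have $q \in R(f)$, so that construction applies verbatim with $T = \{q\}$; the only work is to check that the argument produces exactly the pair of conclusions claimed here.

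Concretely, I would proceed as follows. First choose $n_q \in \Z$ with $q \mid f(n_q)$; because $q$ is good, every root of $\bar f$ in $\F_q$ is simple, hence $q \nmid f'(n_q)$, and after adjusting $n_q$ modulo $q$ if necessary we may also assume $f(n_q) \ne 0$. Thus the set $A = \{n \in \Z : f(n) \ne 0,\ q \mid f(n),\ q \nmid f'(n)\}$ is nonempty. If some $n \in A$ already satisfies $\ord_q(f(n))$ odd, that $n$ works and we are done. Otherwise, fix $n \in A$ and write $\ord_q(f(n)) = 2s$ with $s \ge 1$; put $v = q^{2s-1}$. Expanding, $f(n+v) = f(n) + f'(n)\,v + z\,v^2$ for some $z \in \Z$, and here $\ord_q(f(n)) = 2s$, $\ord_q(f'(n)\,v) = 2s-1$ (using $q \nmid f'(n)$), and $\ord_q(z\,v^2) \ge 4s - 2 \ge 2s$; hence the middle term strictly dominates and $\ord_q(f(n+v)) = 2s-1$, which is odd. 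Moreover $f'(n+v) \equiv f'(n) \pmod q$, so $q \nmid f'(n+v)$, and $f(n+v) \ne 0$ since it has nonzero $q$-adic valuation. So $n+v$ is the required integer.

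I do not expect any genuine obstacle: the lemma is isolated from the proof of Lemma~\ref{roots_lem_aux} precisely because a later section needs this bare statement about a single prime rather than the full squarefree-part conclusion, and in the single-prime case the perturbation $n \mapsto n + q^{2s-1}$ is the only move needed (the product over $T \setminus W$ in the original formula for $v$ is empty here, since if $\ord_q(f(n))$ were odd we would not be in this case). The sole point meriting a sentence of care is verifying that $z \in \Z$, i.e.\ that the higher Taylor coefficients of $f$ at an integer are integers, which is immediate from the binomial expansion of $f(n+v)$.
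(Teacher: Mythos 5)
Your proposal is correct and is essentially the paper's own argument: the paper proves this lemma simply by invoking the proof of Lemma~\ref{roots_lem_aux} with $T=\{q\}$ and noting that the set $B$ there is nonempty, which is exactly the single-prime specialization (with $v=q^{2s-1}$) that you have written out in detail.
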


\begin{proof}
Following the proof of Lemma \ref{roots_lem_aux} with $T=\{q\}$, this result is the statement that the set $B$ is nonempty.
\end{proof}

\begin{prop}\label{roots_prop}
With notation as in Lemma \ref{roots_lem_aux}, there exist infinitely many $d\in\calS(f)$ such that $d$ is divisible by every prime in $T$.
\end{prop}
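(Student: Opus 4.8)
The plan is to exploit the flexibility in the hypothesis: $T$ is only required to be a finite subset of $R(f)$, so one can enlarge it one prime at a time and apply Lemma \ref{roots_lem_aux} repeatedly, using the newly added primes as a book-keeping device to force distinctness.

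First I would recall from Lemma \ref{roots_modq_lem} that $R(f)$ is infinite, so that $R(f)\setminus T$ contains infinitely many primes. For each such prime $q'$, the set $T\cup\{q'\}$ is again a finite subset of $R(f)$, so Lemma \ref{roots_lem_aux} applies to it and produces an element $d_{q'}\in\calS(f)$ divisible by every prime in $T\cup\{q'\}$. In particular each $d_{q'}$ is divisible by every prime in $T$, which is what we want, and in addition $q'\mid d_{q'}$.

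It then remains to check that, as $q'$ ranges over the infinitely many primes of $R(f)\setminus T$, the integers $d_{q'}$ assume infinitely many distinct values. This is immediate: every $d_{q'}$ is a nonzero integer having $q'$ as a prime divisor, so if the set $\{d_{q'}\}$ were finite it would be a finite collection of nonzero integers whose (necessarily finite) set of prime divisors nonetheless contained all the primes $q'$, a contradiction. Hence $\calS(f)$ contains infinitely many elements divisible by every prime in $T$.

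I do not expect a genuine obstacle here; the one point to be careful about is that one should not try to extract infinitely many $d$ directly from the construction inside the proof of Lemma \ref{roots_lem_aux} by varying the starting integer $n$, since distinct choices of $n$ may yield the same squarefree part $S(f(n))$. Introducing a fresh prime divisor of $d$ at each stage is what makes the distinctness argument clean.
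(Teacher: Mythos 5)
Your proposal is correct and matches the paper's proof essentially verbatim: for each prime $q'\in R(f)\setminus T$ you apply Lemma \ref{roots_lem_aux} to $T\cup\{q'\}$, and the resulting elements $d_{q'}$ must be infinitely many since each is divisible by $q'$ and any single element of $\calS(f)$ has only finitely many prime divisors. The paper phrases this last step as the map $q\mapsto d_q$ being finite-to-one, which is the same argument.
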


\begin{proof}
Let $t$ be the product of all the primes in $T$, and fix any prime $q\in R(f)\setminus T$. Applying Lemma \ref{roots_lem_aux} to the finite subset $T\cup\{q\}$ of $R(f)$ we see that there is an element $d_q\in\calS(f)$ that is divisible by $qt$. Since $d_q$ is divisible by $q$, the map $q\mapsto d_q$ is necessarily finite-to-one (as every element of $\calS(f)$ is only divisible by finitely many primes). By Lemma \ref{roots_modq_lem}, the set $R(f)\setminus T$ is infinite, so its image in $\calS(f)$ must also be infinite. Hence, there are infinitely many elements of the form $d_q$ in $\calS(f)$. By construction, every number $d_q$ is divisible by $t$, and hence by every prime in $T$.
\end{proof}

\begin{thm}\label{0_thm} Let $p$ be a good prime for $f(x)$.
\begin{enumerate}
\item If $f(x)$ has odd degree, then there are infinitely many $d\in\calS(f)$ such that $p|d$.
\item Suppose $f(x)$ has even degree. 
\begin{enumerate}
\item If there exists $d\in\calS(f)$ such that $p|d$, then $f(x)$ has a root modulo $p$.
\item Conversely, if $f(x)$ has a root modulo $p$, then there are infinitely many $d\in\calS(f)$ such that $p|d$.
\end{enumerate}
\end{enumerate}
\end{thm}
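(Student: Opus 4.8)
The plan is to treat the three parts separately; all of them except the infinitude in part~(1) will reduce quickly to results already established in \S\ref{0_class_section} together with a one-line valuation computation, so essentially all of the work goes into part~(1).

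For part~(1) the governing principle I would use is that $p\mid S(r)$ precisely when $\ord_p(r)$ is odd, combined with the fact that at a pole this parity is rigid. Write $n=\deg f$ (odd), and note $p\nmid a_n$ (the leading coefficient) since $p$ is good. For any $r\in\Q$ with $\ord_p(r)<0$ the term $a_n r^n$ strictly dominates $p$-adically --- for $k<n$ one has $\ord_p(a_k r^k)\ge k\,\ord_p(r)>n\,\ord_p(r)$ --- so $\ord_p(f(r))=n\,\ord_p(r)$. Taking $r=c/p$ with $p\nmid c$ then gives $\ord_p(f(c/p))=-n$, which is odd, and hence $p\mid S(f(c/p))$; so one element of $\calS(f)$ divisible by $p$ is already in hand. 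To upgrade this to infinitely many, I would pass to $g(x)=p^{\,n}f(x/p)\in\Z[x]$, which is separable of degree $n$ and coprime to $p$ at every $c$ with $p\nmid c$, and check that $S(f(c/p))=p\cdot S(g(c))$. It then suffices to show $\{S(g(c)):c\in\Z,\ p\nmid c\}$ is infinite: for $n=1$ this is trivial (there $\calS(f)$ is the set of all squarefree integers), and for $n\ge 3$ it follows from Siegel's theorem, since otherwise infinitely many $c$ would give integral points on one of finitely many curves $d\,y^2=g(x)$, each of genus $\ge 1$. Alternatively --- more in the spirit of \S\ref{0_class_section} --- one can avoid Siegel: for each large prime $q\in R(f)$, the polynomial $g$ is good at $q$ and has a root modulo $q$, so a Chinese Remainder Theorem argument together with Lemma~\ref{roots_lem_oneprime} applied to $g$ yields an input $c$ with $p\nmid c$ and $\ord_q(g(c))$ odd, producing $d_q\in\calS(f)$ divisible by $pq$; since $q\mapsto d_q$ is finite-to-one and $R(f)$ is infinite, this gives infinitely many such $d$.

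For part 2(a) I would argue by contradiction on $\ord_p(r)$: if $d=S(f(r))$ is divisible by $p$ then $\ord_p(f(r))$ is odd, but the domination argument above shows that when $\ord_p(r)<0$ we get $\ord_p(f(r))=n\,\ord_p(r)$, which is \emph{even} because $n$ is even. Hence $\ord_p(r)\ge 0$, so $f(r)$ is a $p$-adic integer, $\ord_p(f(r))\ge 1$, and reducing modulo $p$ exhibits the reduction of $r$ as a root of $\bar f$ over $\F_p$; thus $f$ has a root modulo $p$. Part 2(b) is then immediate: a good prime $p$ at which $f$ has a root lies in $R(f)$, so Proposition~\ref{roots_prop} applied with $T=\{p\}$ produces infinitely many $d\in\calS(f)$ divisible by $p$.

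The main obstacle is exactly the passage from one to infinitely many $d$ in part~(1). Exhibiting a single $d$ divisible by $p$ is an elementary observation about the pole of $f$ at infinity; getting infinitely many forces one to rule out the squarefree parts $S(g(c))$ collapsing onto a finite set, and I do not see how to do this without either invoking Siegel's theorem or reworking the congruence-and-Taylor-expansion machinery of \S\ref{0_class_section}.
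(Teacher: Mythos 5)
Your proof is correct and follows essentially the same route as the paper: part (1) produces elements of $\calS(f)$ from rational inputs with a pole at $p$ (you take $r=c/p$ with varying numerator coprime to $p$, the paper takes $r=1/p^n$ with $n$ odd) and deduces infinitude from Siegel's theorem, treating degree 1 separately; part 2(a) is the same reduction-mod-$p$ argument recast in valuation language, and 2(b) invokes Proposition \ref{roots_prop} with $T=\{p\}$ exactly as the paper does. Your sketched Siegel-free alternative via $R(f)$ and Lemma \ref{roots_lem_oneprime} is a sound extra option but is not needed.
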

\begin{proof}
Suppose first that $f(x)$ has odd degree, and write $f(x)=\sum_{i=0}^{2g+1}a_ix^i$. If $g=0$, the result follows by noting that for any prime $q\ne p$ the equation $f(r)=pq$ has a rational solution, so that $pq\in\calS(f)$. As $q$ varies over all primes different from $p$, we obtain infinitely many elements of $\calS(f)$ that are divisible by $p$. Assume now that $g\ge 1$, and let
\[F(x)=x^{2g+2}\cdot f(1/x)=x(a_{2g+1}+a_{2g}x+\cdots+a_0x^{2g+1}).\] 
Since $p$ is good for $f(x)$, $p$ does not divide $a_{2g+1}$. Hence, for every positive integer $n$,
\[\ord_p(F(p^n))=n.\] In particular, if $n$ is odd, then $\ord_p(F(p^n))$ is odd, so $p$ divides the squarefree part of $F(p^n)$. Letting $d=S(F(p^n))$, we then have 
\[p|d\;\;\text{and}\;\; d=S(f(1/p^n))\in\calS(f).\]
Thus, we have shown that the set $D=\{S(F(p^n)): n\text{\;is\;odd}\}$ is contained in $\calS(f)$, and every number in $D$ is divisible by $p$. We claim that $D$ is infinite, which will prove part (1) of the theorem. Note that if $d\in D$, say $d=S(F(p^n))$, then the Diophantine equation $dy^2=F(x)$ has an integral solution (with $x=p^n$). Since $g\ge 1$, Siegel's theorem \cite[Thm. D.9.1]{hindry_silverman} implies that this equation has only a finite number of integral solutions $(x,y)$. Hence, $d$ can only be the squarefree part of $F(p^n)$ for finitely many values of $n$. It follows that the set $D$ is infinite, proving our claim.

To prove part (2), suppose that $f(x)$ has even degree and write $f(x)=\sum_{i=0}^{2g+2}a_ix^i$. Let $F(x,y)\in\Z[x,y]$ be defined by
\[F(x,y)=y^{2g+2}\cdot f(x/y)=a_{2g+2}x^{2g+2}+a_{2g+1}x^{2g+1}y+\cdots+a_0y^{2g+2}.\]
If $\calS(f)$ contains an element divisible by $p$, then there is a rational number $r$ such that $f(r)\ne 0$ and the integer $d=S(f(r))$ is divisible by $p$. Writing $r=a/b$ with $a$ and $b$ coprime integers, we have
\[d=S(f(a/b))=S(F(a,b)).\] 
Hence, there is an integer $s$ such that $F(a,b)=ds^2$. We claim that $p$ cannot divide $b$. For suppose that $p|b$, and reduce the equation $ds^2=F(a,b)$ modulo $p$; we obtain
\[a_{2g+2}\cdot a^{2g+2} \equiv F(a,b) \equiv ds^2\equiv 0 \Mod p.\]
However, this is impossible because $p$ does not divide $a_{2g+2}$ (as $p$ is good for $f(x)$), and $p$ does not divide $a$ (since $p|b$ and $a$ is coprime to $b$). This proves the claim. The equation 
\[b^{2g+2}\cdot f(a/b)=ds^2\] 
therefore takes place in the local ring $\Z_{(p)}$, so we may reduce the equation modulo $p$ to conclude that $\bar f(a/b)=0$, and hence $f(x)$ has a root modulo $p$. This proves part 2(a) of the theorem.

Conversely, suppose that $f(x)$ has a root modulo $p$. Then $p\in R(f)$, so we may apply Proposition \ref{roots_prop} to the set $T=\{p\}\subset R(f)$, and thus obtain that there are infinitely many $d\in\calS(f)$ such that $p|d$. This proves 2(b).
\end{proof}

Having given a simple criterion for deciding whether the residue class of 0 modulo $p$ is represented in $\calS(f)$, we will henceforth restrict attention to nonzero classes.

\section{The case of degrees 1-4}\label{small_degree_section}

For every positive integer $n$ we define a statement $\A(n)$ as follows. 

\textbf{Statement} $\A(n)$. Let $f(x)\in\Z[x]$ be a separable polynomial of degree $n$, and let $p$ be a good prime for $f(x)$. Then for every integer $m$ coprime to $p$ there exist infinitely many elements $d\in\calS(f)$ such that $d\equiv m\Mod p$.

We show in this section that $\A(1)$ and $\A(2)$ hold unconditionally, and that $\A(3)$ is implied by the Parity Conjecture for elliptic curves over $\Q$. Furthermore, still assuming the Parity Conjecture, we show that $\A(4)$ holds for polynomials having a rational root. In order to obtain these results it will be convenient to work with somewhat stronger statements $\I(n)$ and $\P(n)$.

\textbf{Statement} $\I(n)$. Let $h(x)\in\Z[x]$ be a separable primitive\footnote[2]{A polynomial is called \textit{primitive} if the greatest common divisor of its coefficients is equal to 1.} polynomial of degree $n$, and let $p$ be a good prime for $h(x)$. Then for every integer $m$ coprime to $p$ there exist infinitely many primes $q\in\calS(h)$ such that $q\equiv m\Mod p$.

\begin{lem}\label{odd_imp}
For every positive integer $n$, $\mathrm I(n)$ implies $\mathrm A(n)$.
\end{lem}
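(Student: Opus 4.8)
The plan is to reduce a general separable polynomial $f(x)$ of degree $n$ to a primitive one $h(x)$ of the same degree by clearing the common factor of its coefficients, and then to relate $\calS(f)$ and $\calS(h)$ in a way that preserves the residue class of representatives modulo $p$. First I would write $f(x) = c\cdot h(x)$ where $c$ is the content of $f$ (up to sign) and $h(x)\in\Z[x]$ is primitive of the same degree $n$; note that $h$ is separable since $f$ is. The key observation is that for any rational $r$ with $f(r)\ne 0$ we have $S(f(r)) = S(c)\cdot S(h(r)) / \gcd\text{-adjustment}$ — more precisely, $S(c\cdot h(r))$ differs from $S(h(r))$ only by the squarefree part of $c$, which is a fixed integer independent of $r$. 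So $\calS(f) = \{\,S(c\cdot n) : n\in\calS(h)\,\}$ up to this bookkeeping.

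Next I would deal with the primes. A prime $p$ good for $f(x)$ need not be good for $h(x)$ if $p\mid c$, but since there are only finitely many such primes and $\mathrm{A}(n)$ only asserts the conclusion for good primes, I should instead argue the other way: given $p$ good for $f$, I want to produce the required elements of $\calS(f)$ from elements of $\calS(h)$. If $p\mid c$ then $p$ divides every value $f(r)$ with $\ord_p$ bounded below, and one checks directly that $\calS(f)$ cannot contain elements coprime to $p$ unless $\ord_p(c)$ is even, etc. — but in fact the cleaner route is to observe that $p$ good for $f$ forces $p\nmid c$, because $\bar f(x)$ has the same degree as $f$, which already holds, and... actually the content can still be divisible by $p$. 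The safe fix: absorb $S(c)$ into the target residue class. Given $m$ coprime to $p$, I solve $S(c)\cdot m' \equiv m \pmod p$ for $m'$ (possible since $S(c)$ is a fixed nonzero integer; if $p\mid S(c)$ handle separately, but then $p\mid c$ and one shows no value of $f$ has squarefree part coprime to $p$ precisely when $\ord_p(c)$ is odd — in the even case divide it out). Apply $\mathrm{I}(n)$ to $h$ and the residue class $m'$: this gives infinitely many primes $q\in\calS(h)$ with $q\equiv m'\pmod p$, and $q$ coprime to $p$ for all but finitely many of them. Then $S(c)\cdot q$ (suitably squarefree-adjusted) lies in $\calS(f)$ and is $\equiv m\pmod p$, and these are infinitely many distinct elements since the $q$ are distinct primes.

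The main obstacle I anticipate is handling the content $c$ when $p\mid c$: one must check carefully whether $\calS(f)$ can contain anything coprime to $p$ at all, which depends on the parity of $\ord_p(c)$, and the lemma as stated should come out true because the "good prime" hypothesis together with the structure of $\calS(f)$ makes the bad case vacuous or reducible. A secondary point to be careful about is that $\mathrm{I}(n)$ gives \emph{primes} $q$ in $\calS(h)$, and I am only using that these are squarefree integers in a prescribed class — the primality is a convenient strengthening that guarantees $q$ is automatically squarefree and coprime to $p$, so multiplying by the fixed squarefree part of $c$ stays in the right class. I would close by remarking that the finite-to-one and infinitude bookkeeping is identical to that used in the proof of Proposition \ref{roots_prop}.
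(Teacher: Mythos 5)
Your overall strategy is the same as the paper's: factor out the content, apply $\I(n)$ to the primitive part with the target residue class adjusted by (the inverse of) the squarefree part of the content, and multiply back. The core construction --- infinitely many primes $q\in\calS(h)$ with $q\equiv\delta^{-1}m\Mod p$, then $d=\delta q\in\calS(f)$ with $d\equiv m\Mod p$, distinct for distinct $q$ --- is exactly the paper's argument, and it works.

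The one place you go astray is the extended worry about the case $p\mid c$. You first (correctly) observe that $p$ good for $f$ should force $p\nmid c$, and then retract it (``actually the content can still be divisible by $p$''); the retraction is wrong. The content divides every coefficient of $f$, in particular the leading one, so $p\mid c$ would give $\deg\bar f<\deg f$, contradicting the goodness of $p$. Hence your entire ``safe fix,'' the case analysis on the parity of $\ord_p(c)$, concerns a vacuous case --- which is fortunate, because the claim made there (that $\calS(f)$ contains nothing coprime to $p$ precisely when $\ord_p(c)$ is odd) is false in general, since $\ord_p(f(r))=\ord_p(c)+\ord_p(h(r))$ and the second term can itself be odd. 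The same observation ($p\nmid c$, so $p$ is good for $h$ as well) is also what licenses invoking $\I(n)$ for $h$ in the first place, so it should be stated rather than left in doubt. Finally, the paper's bookkeeping is cleaner than your ``$S(c)\cdot S(h(r))$ up to gcd-adjustment'': write $c=\delta s^2$ with $\delta$ squarefree, discard the finitely many primes $q$ dividing $\delta$, and then $f(r)=\delta s^2h(r)=\delta q(st)^2$ with $\delta q$ already squarefree exhibits $S(f(r))=\delta q$ directly, no adjustment needed. You do gesture at both points (avoiding finitely many bad $q$, using primality of $q$), so once the $p\nmid c$ observation is made explicit your proof closes up and coincides with the paper's.
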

\begin{proof}
Assume that $\I(n)$ holds. Let $f(x)$ be a separable polynomial of degree $n$, let $p$ be a good prime for $f(x)$, and let $m$ be any integer coprime to $p$. We must show that there are infinitely many elements $d\in\calS(f)$ such that $d\equiv m\Mod p$. By factoring out the greatest common divisor of the coefficients of $f(x)$, we may write $f(x)=\delta s^2\cdot h(x)$ with $\delta$ squarefree and $h(x)$ primitive of degree $n$. Note that since $p$ is good for $f(x)$, it is also good for $h(x)$. Applying  Statement $\I(n)$ to $h(x)$ we see that there exist infinitely many primes $q\in\calS(h)$ such that $q\equiv \delta^{-1}m\Mod p$. Here, $\delta^{-1}$ denotes the multiplicative inverse of $\delta$ modulo $p$. Fix any such prime $q$ that does not divide $\delta$. The integer $d=\delta q$ is then squarefree and congruent to $m$ modulo $p$. Since $q\in\calS(h)$, there are rational numbers $r$ and $t$ such that $h(r)=qt^2$ and $h(r)\ne 0$. Then
\[f(r)=\delta s^2\cdot h(r)= \delta q(st)^2=d(st)^2,\]
so $d=S(f(r))\in\calS(f)$. Since we have infinitely many choices for $q$, this construction yields infinitely many numbers $d\in\calS(f)$ such that $d\equiv m\Mod p$.
\end{proof}

\textbf{Statement} $\P(n)$. Let $h(x)\in\Z[x]$ be a separable polynomial of degree $n$ with square leading coefficient, and let $p$ be a good prime for $h(x)$. Then for every integer $m$ coprime to $p$ there exist infinitely many primes $q\in\calS(h)$ such that $q\equiv m\Mod p$.

\begin{lem}\label{even_imp}
For every positive integer $n$, $\mathrm P(n)$ implies $\mathrm A(n)$.
\end{lem}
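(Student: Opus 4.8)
The plan is to imitate the proof of Lemma~\ref{odd_imp}, replacing the reduction to a \emph{primitive} polynomial by a reduction to a polynomial with \emph{square leading coefficient}, which is what $\mathrm P(n)$ requires. Given a separable $f(x)\in\Z[x]$ of degree $n$, a good prime $p$ for $f(x)$, and an integer $m$ coprime to $p$, I would set $h(x)=a\cdot f(x)$, where $a$ denotes the leading coefficient of $f(x)$. Then $h(x)\in\Z[x]$ has degree $n$ and leading coefficient $a^2$, which is a square. Before applying $\mathrm P(n)$ to $h(x)$ I would check the remaining hypotheses: separability of $h(x)$ follows from the fact that scaling a degree-$n$ polynomial by $a$ multiplies its discriminant by $a^{2n-2}$, so $\operatorname{disc}(h)=a^{2n-2}\operatorname{disc}(f)\ne 0$; and $p$ is good for $h(x)$ because $p$ divides neither $a$ (as $p$ is good for $f(x)$) nor $\operatorname{disc}(f)$, hence divides neither $a^2$ nor $\operatorname{disc}(h)$.

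Next I would transfer the conclusion of $\mathrm P(n)$ from $\calS(h)$ back to $\calS(f)$. Write $a=\delta c^2$ with $\delta=S(a)$ squarefree, and note $p\nmid\delta$. Applying $\mathrm P(n)$ to $h(x)$ with the residue class of $m\delta^{-1}$ modulo $p$ — which is coprime to $p$ since $p\nmid m$ and $p\nmid\delta$ — produces infinitely many primes $q\in\calS(h)$ with $q\equiv m\delta^{-1}\Mod p$; after discarding the finitely many such $q$ that divide $a$, I may assume $q\nmid a$. For each remaining $q$, choose $r\in\Q$ with $h(r)\ne 0$ and $S(h(r))=q$, so that $h(r)=qw^2$ for some nonzero $w\in\Q$. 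Then $f(r)=h(r)/a=qw^2/(\delta c^2)$, which lies in the same square class as $q\delta$; since $q\nmid\delta$, the integer $q\delta$ is squarefree, so $S(f(r))=q\delta$. As $f(r)\ne 0$, this shows $q\delta\in\calS(f)$, and clearly $q\delta\equiv m\Mod p$. Distinct primes $q$ give distinct elements $q\delta$, so we obtain infinitely many, establishing $\mathrm A(n)$.

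I do not expect a genuine obstacle here: the argument is a close analogue of Lemma~\ref{odd_imp}, and the only steps requiring care are the square-class bookkeeping behind the identity $S(f(r))=q\delta$ (in particular, remembering to discard the finitely many $q$ dividing $a$ so that $q\delta$ is squarefree) and the verification that separability and the good-prime condition persist after multiplying by the constant $a$, including degenerate cases such as $n=1$, $a$ already a square, or $a<0$. One could instead replace $f(x)$ by the monic polynomial $a^{\,n-1}f(x/a)$, which transfers $\calS$ even more directly — indeed $\calS$ is unchanged when $n$ is odd — but that route forces a case split on the parity of $n$, so multiplication by $a$ seems cleaner.
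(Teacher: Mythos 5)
Your proof is correct and follows essentially the same route as the paper: multiply $f(x)$ by a constant to force a square leading coefficient, apply $\mathrm P(n)$, and transfer back via the square-class identity $S(f(r))=\delta q$ after discarding the finitely many $q$ dividing the leading coefficient. The only (inessential) difference is that the paper multiplies by the squarefree part $\delta=S(a)$ rather than by $a$ itself, which makes the bookkeeping $f(r)=h(r)/\delta=\delta q(s/\delta)^2$ marginally shorter.
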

\begin{proof}
Assume that $\P(n)$ holds. Let $f(x)$ be a separable polynomial of degree $n$, let $p$ be a good prime for $f(x)$, and let $m$ be any integer coprime to $p$. We must show that there are infinitely many elements $d\in\calS(f)$ such that $d\equiv m\Mod p$. 
Let $\delta$ be the squarefree part of the leading coefficient of $f(x)$, and define $h(x)=\delta\cdot f(x)$. Note that $p$ is good for $h(x)$, and that the leading coefficient of $h(x)$ is a square. Applying  statement $\P(n)$ to $h(x)$ we see that there exist infinitely many primes $q\in\calS(h)$ such that $q\equiv \delta^{-1}m\Mod p$. (Note that $\delta$ has a multiplicative inverse modulo $p$ because $p$ does not divide the leading coefficient of $f(x)$.) Fix any such prime $q$ that does not divide $\delta$. The integer $d=\delta q$ is then squarefree and congruent to $m$ modulo $p$. Since $q\in\calS(h)$, we can write $qs^2=h(r)$ for some rational numbers $r,s$ with $h(r)\ne 0$. Then
\[d(s/\delta)^2=\delta q(s/\delta)^2=h(r)/\delta=f(r),\] so $d=S(f(r))\in\calS(f)$. Since we have infinitely many choices for $q$, this construction yields infinitely many numbers $d\in\calS(f)$ such that $d\equiv m\Mod p$.
\end{proof}

\begin{lem}\label{prime_sys_lem}
Let $a_1,\ldots, a_n$ be pairwise coprime integers, and let  $x_1,\ldots, x_n$ be integers with $\gcd(a_i,x_i)=1$ for all $i$. Then there exist infinitely many prime numbers $q$ such that $q \equiv x_i\;(\mathrm{mod\;} a_i)$ for all $i$.
\end{lem}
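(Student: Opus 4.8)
The plan is to combine the Chinese Remainder Theorem with Dirichlet's theorem on primes in arithmetic progressions; this is the natural strengthening of the CRT step used in Lemma \ref{roots_lem_aux} from "has a solution" to "has a prime solution''.

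First I would normalize the moduli. Replacing each $a_i$ by $|a_i|$ changes none of the hypotheses or conclusions, and any index $i$ with $a_i=\pm 1$ imposes a vacuous congruence and can simply be discarded. (If some $a_i=0$, the hypothesis $\gcd(a_i,x_i)=1$ forces $x_i=\pm 1$ and the desired conclusion fails trivially, so one assumes $a_i\ne 0$, as is the case in every application in this paper.) Thus I may assume each $a_i\ge 2$, with the $a_i$ still pairwise coprime.

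Next, set $a=\prod_{i=1}^{n}a_i$. Because the $a_i$ are pairwise coprime, the Chinese Remainder Theorem yields an integer $x$ with $x\equiv x_i\Mod{a_i}$ for every $i$. For each $i$ we then have $\gcd(x,a_i)=\gcd(x_i,a_i)=1$, and since the $a_i$ are pairwise coprime this upgrades to $\gcd(x,a)=1$. Finally, apply Dirichlet's theorem to the progression $x+a\Z$: as $\gcd(x,a)=1$, it contains infinitely many primes $q$, and each such $q$ satisfies $q\equiv x\equiv x_i\Mod{a_i}$ for all $i$, as required.

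There is essentially no real obstacle here; the argument is routine once one invokes Dirichlet's theorem. The only steps needing a moment's attention are the harmless normalization of the moduli and the observation that coprimality of $x$ to each $a_i$ promotes to coprimality to the product $a$ — this is precisely where pairwise (rather than merely collective) coprimality of the $a_i$ is used, and it guarantees that Dirichlet's theorem applies to the modulus $a$.
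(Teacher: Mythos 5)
Your proof is correct and follows the same route as the paper's: apply the Chinese Remainder Theorem to get a single residue $x$ modulo $a=a_1\cdots a_n$, observe that $\gcd(x,a)=1$, and invoke Dirichlet's theorem on primes in the progression $x+a\Z$. The preliminary normalization of the moduli is harmless extra care but not needed in the paper's applications.
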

\begin{proof}
By the Chinese Remainder Theorem, there is an integer $N$ such that $N\equiv x_i\Mod {a_i}$ for all $i$. Note that $N$ is coprime to each $a_i$, and thus coprime to the number $a=a_1\cdots a_n$. Dirichlet's theorem on primes in arithmetic progressions \cite[p. 251, Thm. 1]{ireland-rosen} then implies that there exist infinitely many primes $q$ that are congruent to $N$ modulo $a$. Clearly, every such prime $q$ satisfies $q\equiv x_i\Mod {a_i}$ for all $i$.
\end{proof}

\begin{prop}\label{degree1_prop}
Statement $\mathrm A(1)$ holds.
\end{prop}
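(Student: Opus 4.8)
The plan is to reduce $\A(1)$ to the statement $\I(1)$ via Lemma \ref{odd_imp}, so it suffices to prove $\I(1)$: given a separable primitive linear polynomial $h(x) = ax + b \in \Z[x]$ (so $\gcd(a,b) = 1$ since $h$ is primitive), a good prime $p$ for $h$ (which here just means $p \nmid a$, as the discriminant of a linear polynomial is a nonzero constant), and an integer $m$ coprime to $p$, I must produce infinitely many primes $q \in \calS(h)$ with $q \equiv m \pmod p$.

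First I would observe that for a linear polynomial, $\calS(h)$ is as large as possible: for any nonzero integer $N$, setting $r = (N - b)/a \in \Q$ gives $h(r) = N$, so $N \in \calS(h)$ whenever $N$ is squarefree and nonzero. In particular every prime number lies in $\calS(h)$. So the task collapses to the purely elementary assertion that there exist infinitely many primes $q$ with $q \equiv m \pmod p$ — which is immediate from Lemma \ref{prime_sys_lem} applied with $n = 1$, $a_1 = p$, $x_1 = m$ (the hypothesis $\gcd(a_1, x_1) = \gcd(p,m) = 1$ holds since $m$ is coprime to $p$), or directly from Dirichlet's theorem. Each such prime $q$ is squarefree and nonzero, hence $q \in \calS(h)$, and there are infinitely many of them, establishing $\I(1)$.

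Having proved $\I(1)$, I invoke Lemma \ref{odd_imp} with $n = 1$ to conclude that $\A(1)$ holds. There is essentially no obstacle here: the degree-1 case is genuinely trivial once the reduction machinery ($\I(n) \Rightarrow \A(n)$) and Dirichlet's theorem are in place, since a linear polynomial represents every integer and imposes no arithmetic constraint beyond what is built into the residue condition. The only point requiring a word of care is checking that "good for $h$" really does reduce to $p \nmid a$ for linear $h$, and that primitivity of $h$ plays no role beyond letting us quote $\I(1)$ verbatim — but these are immediate from the definitions. If one prefers to bypass the $\I(n)$ formalism, one can argue directly: for $f(x) = \alpha x + \beta$ with $p \nmid \alpha$, and $m$ coprime to $p$, pick (by Lemma \ref{prime_sys_lem}) infinitely many primes $q$ with $q \equiv m \pmod p$ and $q$ large enough that $(q - \beta)/\alpha$ is a well-defined rational with $f((q-\beta)/\alpha) = q \neq 0$; then $q = S(q) \in \calS(f)$.
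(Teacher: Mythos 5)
Your proposal is correct, and it follows the paper's overall strategy (reduce $\A(1)$ to $\I(1)$ via Lemma \ref{odd_imp}, then invoke Lemma \ref{prime_sys_lem}), but the way you establish $\I(1)$ differs in one genuine respect. The paper works with \emph{integer} arguments: it applies Lemma \ref{prime_sys_lem} with the two moduli $p$ and $a$, producing infinitely many primes $q$ with $q\equiv m \Mod p$ and $q\equiv b\Mod a$, so that $q=f(n)$ for some $n\in\Z$; this is exactly where primitivity ($\gcd(a,b)=1$) and goodness ($p\nmid a$) are needed to make the moduli and residues coprime. You instead exploit the fact that $\calS(h)$ is built from \emph{rational} arguments, so for a linear $h$ every nonzero squarefree integer --- in particular every prime --- lies in $\calS(h)$ via $r=(N-b)/a$, and only the single congruence $q\equiv m\Mod p$ (Dirichlet) is required; primitivity then plays no real role, as you note. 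Your route is slightly more elementary and proves a stronger intermediate fact ($\calS(h)$ contains all primes), though it is special to degree $1$, whereas the paper's integer-argument formulation runs parallel to the structure it uses for the other statements $\I(n)$ and $\P(n)$. Either argument is complete and valid.
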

\begin{proof}
By Lemma \ref{odd_imp} it suffices to show that $\I(1)$ holds. Let $f(x)$ be a primitive polynomial of degree 1, so that we can write $f(x)=ax+b$ with $\gcd(a,b)=1$. Let $p$ be a good prime for $f(x)$, and let $m$ be any integer not divisible by $p$. Since $p$ does not divide $a$, Lemma \ref{prime_sys_lem} implies that there exist infinitely many primes $q$ satisfying $q \equiv m \Mod p$ and $q \equiv b\Mod a$. By construction, every such prime has the form $q=b+na=f(n)$ for some $n\in\Z$, and so $q=S(f(n))\in\calS(f)$. This shows that there are infinitely many primes $q\in\calS(f)$ such that $q\equiv m\Mod p$, and $\I(1)$ is proved.
\end{proof}

In order to prove that $\A(2)$ holds we will need the following classical result; see \cite[p. 273]{ireland-rosen} for further details on this theorem.
\begin{legendre_thm}
Let $a,b,c$ be nonzero integers that are squarefree, pairwise coprime, and not all positive nor all negative. Then the equation \[ax^2+by^2+cz^2=0\] has a nontrivial integral solution if and only if the following conditions are satisfied:
\begin{itemize}
\item $-bc$ is a square modulo $a$;
\item $-ac$ is a square modulo $b$;
\item $-ab$ is a square modulo $c$.
\end{itemize} 
\end{legendre_thm}

\begin{prop}\label{degree2_prop}
Statement $\mathrm A(2)$ holds.
\end{prop}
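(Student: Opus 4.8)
The plan is to deduce Statement $\mathrm A(2)$ from Statement $\mathrm P(2)$ using Lemma \ref{even_imp}, so it suffices to prove $\mathrm P(2)$. Accordingly, fix a separable polynomial $h(x)=a^2x^2+bx+c\in\Z[x]$ with square leading coefficient, a good prime $p$ for $h(x)$, and an integer $m$ coprime to $p$. Let $\Delta=b^2-4a^2c$ be the discriminant of $h$; it is nonzero since $h$ is separable. The first step is to convert the condition ``$q\in\calS(h)$'' for a prime $q$ into the solvability of a ternary quadratic equation. Completing the square gives $4a^2h(x)=(2a^2x+b)^2-\Delta$, and because $4a^2$ is a perfect square, a prime $q$ lies in $\calS(h)$ if and only if $u^2-qv^2=\Delta$ has a solution $u,v\in\Q$: from a point $(x,y)$ with $h(x)=qy^2\ne 0$ one takes $u=2a^2x+b$, $v=2ay$, and conversely a rational solution with $v\ne 0$ produces a rational $r$ with $S(h(r))=q$. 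Writing $\Delta=\delta N^2$ with $\delta$ squarefree and homogenizing, this is in turn equivalent to the equation $u^2-qv^2-\delta w^2=0$ having a nontrivial integral solution (any such solution automatically has $w\ne 0$, since $q$ is prime).

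The second step is to apply the Theorem of Legendre. For any prime $q$ not dividing $\Delta$, the coefficients $1,-q,-\delta$ are squarefree, pairwise coprime, and not all of the same sign, so Legendre's criterion says the equation has a nontrivial solution if and only if $\delta$ is a square modulo $q$ and $q$ is a square modulo $|\delta|$. I would force both conditions by restricting $q$ to a single residue class: if $q\equiv 1\Mod{8|\delta|}$ then $q\equiv 1$ is visibly a square modulo $|\delta|$, and quadratic reciprocity together with $q\equiv 1\Mod 8$ (which makes $-1$ and $2$ squares modulo $q$) and $q\equiv 1$ modulo each odd prime divisor of $\delta$ shows that $\delta$ is a square modulo $q$ as well. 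Consequently every prime $q\equiv 1\Mod{8|\delta|}$ that does not divide $\Delta$ belongs to $\calS(h)$; the associated smooth conic has infinitely many rational points, all but finitely many with $v\ne 0$, so each such $q$ is genuinely $S(h(r))$ for some $r\in\Q$.

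The last step is to impose the residue class modulo $p$. Since $p$ is good for $h(x)$ it is odd and does not divide $\Delta$, hence $p\nmid\delta$ and $\gcd(8|\delta|,p)=1$. Lemma \ref{prime_sys_lem} then provides infinitely many primes $q$ with $q\equiv 1\Mod{8|\delta|}$ and $q\equiv m\Mod p$; discarding the finitely many of these that divide $\Delta$ leaves infinitely many primes $q\in\calS(h)$ with $q\equiv m\Mod p$. This proves $\mathrm P(2)$, and therefore $\mathrm A(2)$ by Lemma \ref{even_imp}.

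I expect the main obstacle to be the bookkeeping in the reduction to Legendre's equation: using the square leading coefficient so that the factor $4a^2$ disappears under $S(\cdot)$, handling the squarefree decomposition $\Delta=\delta N^2$, and checking the triviality issue (that the conic carries a rational point with $v\ne 0$ rather than only trivial ones). Once the reduction is in place, the observation that the single congruence $q\equiv 1\Mod{8|\delta|}$ simultaneously satisfies both of Legendre's conditions makes the rest routine.
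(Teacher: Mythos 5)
Your proposal is correct and follows essentially the same route as the paper: reduce $\mathrm A(2)$ to $\mathrm P(2)$ via Lemma \ref{even_imp}, translate $q\in\calS(h)$ into representing $q$ by the form $x^2-\delta y^2$ (the paper phrases this as an explicit $\Q$-equivalence of the binary form $y^2f(x/y)$ with $x^2-\delta y^2$, you complete the square), and then produce primes $q\equiv m\Mod p$, $q\equiv 1\Mod{8|\delta|}$ via Lemma \ref{prime_sys_lem}, quadratic reciprocity, and Legendre's theorem. The only cosmetic difference is in handling degenerate solutions: the paper treats $\delta=1$ separately with an explicit identity, while you use the infinitude of rational points on a conic with a rational point to ensure $v\ne 0$ and $w\ne 0$, which is equally valid.
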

\begin{proof}
By Lemma \ref{even_imp} it suffices to show that $\P(2)$ holds. Let $f(x)$ be a quadratic polynomial with square leading coefficient, so that $f(x)$ has the form $f(x)=a^2x^2+bx+c$ for some integers $a,b,c$. We assume that the discriminant $\Delta=b^2-4a^2c$ is nonzero, and write $\Delta=\delta s^2$ with $\delta$ a squarefree integer and $s\in\Z$. Letting $p$ be a good prime for $f(x)$ and $m$ an integer coprime to $p$, we must show that there exist infinitely many primes $q\in\calS(f)$ such that $q\equiv m\Mod p$.

Let $F(x,y)\in\Z[x,y]$ be the binary quadratic form defined by 
\[F(x,y)=y^2\cdot f(x/y)=a^2x^2+bxy+cy^2.\]
A simple calculation shows that for any prime $q$ we have 
 \[q\in\calS(f) \iff F(x,y) \text{\;represents\;} q \text{\;over\;} \Q.\]
 
 Thus, it suffices to show that there are infinitely many primes $q\equiv m\Mod p$ that are rationally represented by $F(x,y)$. The form $F(x,y)$ is equivalent (over $\Q$) to the diagonal form $G(x,y)=x^2-\delta y^2$. Indeed, letting
 \[
 \begin{pmatrix}
 X\\
 Y
  \end{pmatrix}=
   \begin{pmatrix}
 a & \frac{b}{2a}\\
 0 & \frac{s}{2a}
  \end{pmatrix}
   \begin{pmatrix}
 x\\
 y
  \end{pmatrix}
 \]
 we have $G(X,Y)=F(x,y)$. Since equivalent forms represent the same values, the proof will be complete if we show that there are infinitely many primes $q\equiv m\Mod p$ that are rationally represented by $G(x,y)$. Since $p$ is a good prime for $f(x)$, $p$ is coprime $8\delta$. Hence, by Lemma \ref{prime_sys_lem} there exist infinitely many primes $q$ satisfying
\[ q \equiv m\Mod {p} \;\;\text{and}\;\;q \equiv 1\Mod{8\delta}.\\
\]
Letting $q$ be any such prime, we claim that $q$ is represented by $G(x,y)$ over $\Q$. One can verify using Quadratic Reciprocity that $\delta$ is a square modulo $q$; Legendre's theorem then implies that the equation \[x^2-\delta y^2-qz^2=0\]
has a nontrivial integral solution, say $(x_0,y_0,z_0)$. If $\delta\ne 1$, then we must have $z_0\ne 0$ since $\delta$ is squarefree. In this case we can divide by $z_0^2$ to obtain $G(x_0/z_0,y_0/z_0)=q$. If $\delta=1$, then it is trivial to see that $G(x,y)$ represents $q$: for instance, 
\[G\left(\frac{q+1}{2},\frac{q-1}{2}\right)=q.\] 
This proves our claim and hence the proposition.
\end{proof}

We turn now to consider the statement $\A(3)$. For this statement we do not have an unconditional proof as was the case for $\A(1)$ and $\A(2)$; however, we can bring the machinery of elliptic curves to bear on the problem, and thus provide compelling evidence that $\A(3)$ should hold. Recall that the Parity Conjecture for elliptic curves states that the analytic and algebraic ranks of an elliptic curve over $\Q$ must have the same parity. (See the survey in \cite[Chap. 4]{silverberg} for more details.) We will need to use a different version of this conjecture which relates the rank of an elliptic curve to the rank of a quadratic twist of the curve. For a statement of this conjecture in the literature, see \cite[p. 4]{gouvea-mazur}.

\begin{parity}
Let $E$ be an elliptic curve over $\Q$ with conductor $N_E$, and let $d$ be a squarefree integer coprime to $2\cdot N_E$. Then \[(-1)^{\text{\textnormal{rank}\;} E_d(\Q)}=(-1)^{\text{\textnormal{rank}\;} E(\Q)}\cdot\chi_d(-N_E),\]
where $\chi_d$ is the quadratic Dirichlet character associated to the field $\Q(\sqrt d)$.
\end{parity}

Recall that if $D$ is the discriminant of the field $\Q(\sqrt d)$, then the character
\[\chi_d:(\Z/D\Z)^{\times}\to\{\pm 1\}\] 
can be defined using the Kronecker symbol; 
\[\chi_d(n)=\left(\frac{D}{n}\right).\]
(See \cite[p. 296]{mont-vaughan} for a definition of this symbol.) In particular, $\chi_d$ has the following properties: 

\begin{itemize}
\item $\chi_d(-1)=\text{sign}(d)$
\item If $d\equiv 1\Mod 4$, then $\chi_d(2)=(-1)^{(d^2-1)/8}$.
\item For any odd prime $q$ not dividing $d$, $\chi_d(q)$ is the Legendre symbol $\left(\frac{d}{q}\right)$.
\end{itemize}

\begin{lem}\label{elliptic_lem}
Let $E/\Q$ be an elliptic curve, $p$ an odd prime not dividing the conductor of $E$, and $m$ an integer coprime to $p$. If the Parity Conjecture holds, then there exist infinitely many squarefree integers $d\equiv m \;(\mathrm{mod}\; p)$ such that the twist $E_d$ has positive rank.
\end{lem}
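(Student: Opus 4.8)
The plan is to use the Parity Conjecture for quadratic twists to force the rank of $E_d$ to be odd, hence positive, by arranging the sign of $\chi_d(-N_E)$ appropriately. Fix the conductor $N_E$ and set $\epsilon = (-1)^{\operatorname{rank} E(\Q)} \in \{\pm 1\}$; this is a fixed quantity. According to the parity statement, for any squarefree $d$ coprime to $2N_E$ we have $(-1)^{\operatorname{rank} E_d(\Q)} = \epsilon \cdot \chi_d(-N_E)$, so it suffices to produce infinitely many squarefree $d \equiv m \Mod p$ with $d$ coprime to $2N_E$ and $\chi_d(-N_E) = -\epsilon$. Since $\chi_d$ is multiplicative, $\chi_d(-N_E) = \chi_d(-1)\prod_{\ell \mid N_E}\chi_d(\ell)^{\ord_\ell(N_E)}$, and each factor is governed by congruence conditions on $d$ modulo $8$ and modulo the primes dividing $N_E$ (via the three bulleted properties of $\chi_d$: $\chi_d(-1) = \operatorname{sign}(d)$, the value of $\chi_d(2)$ when $d \equiv 1 \Mod 4$, and $\chi_d(\ell) = \left(\frac{d}{\ell}\right)$ for odd $\ell \nmid d$).

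The key step is then to choose a target residue class modulo $8N_E$ for $d$ realizing $\chi_d(-N_E) = -\epsilon$, and simultaneously require $d \equiv m \Mod p$. First I would reduce to the case where $d = q$ is prime: by Lemma \ref{prime_sys_lem}, since $p$ is coprime to $8N_E$ (as $p$ is odd and does not divide $N_E$), there exist infinitely many primes $q$ satisfying $q \equiv m \Mod p$ together with any prescribed residue class $q \equiv c \Mod{8N_E}$ with $\gcd(c, 8N_E) = 1$. A prime $q$ is automatically squarefree and, if chosen $\equiv c \Mod{8N_E}$ with $c$ a unit, is coprime to $2N_E$. It remains to check that some admissible unit class $c$ modulo $8N_E$ produces $\chi_q(-N_E) = -\epsilon$: this follows because $\chi_d(-N_E)$, as a function of the class of $d$ in $(\Z/8N_E\Z)^\times$, is a nontrivial character on that group — indeed, varying $d$ through that group realizes both signs, since e.g. the factor $\chi_d(-1) = \operatorname{sign}(d)$ can be flipped by choosing $d$ in a class that is "positive" versus "negative" without affecting the other factors, or (if $N_E > 1$) some $\chi_d(\ell)$ factor can be flipped independently by varying $d$ modulo $\ell$. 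Pick such a class $c$ and apply Lemma \ref{prime_sys_lem}; this yields infinitely many primes $q \equiv m \Mod p$ with $\operatorname{rank} E_q(\Q)$ odd, hence $\ge 1$.

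The main obstacle is the last point: verifying that for \emph{every} fixed $N_E$ one can flip the sign of $\chi_d(-N_E)$ by a congruence condition on $d$ that is compatible with the fixed residue $d \equiv m \Mod p$. The compatibility is free because $p \nmid 8N_E$, so by CRT the conditions modulo $p$ and modulo $8N_E$ are independent. The sign-flipping itself is the genuine content: one must track the Kronecker symbol carefully, handling the prime $2$ separately (using that $\chi_d$ depends on $d$ modulo $8$, and that the value $\operatorname{sign}(d)$ is itself one of the available degrees of freedom — one can take $d$ negative), and handling odd primes $\ell \mid N_E$ via quadratic reciprocity. The only case needing a moment's thought is $N_E = 1$, which cannot occur since there is no elliptic curve over $\Q$ of conductor $1$; so $N_E$ has a prime factor and the argument goes through. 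Once the correct class $c$ is identified, the rest is a direct application of Lemma \ref{prime_sys_lem} and the Parity Conjecture.
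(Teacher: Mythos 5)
Your overall strategy is the same as the paper's (Parity Conjecture plus Lemma \ref{prime_sys_lem} to manufacture squarefree $d\equiv m\Mod p$ with $\chi_d(-N_E)$ of the right sign), but the key step — showing that a suitable class actually exists — is justified incorrectly, and the argument as finally assembled fails for some curves. First, $\chi_d(-N_E)$ is \emph{not} a function of the class of $d$ in $(\Z/8N_E\Z)^\times$: the factor $\chi_d(-1)=\mathrm{sign}(d)$ depends on the sign of $d$, which a residue class does not determine. Second, the claimed fallback that ``some $\chi_d(\ell)$ factor can be flipped independently'' breaks down whenever every odd prime divides $N_E$ to an even power: those factors enter as $\chi_d(\ell)^{e_\ell}$ with $e_\ell$ even and contribute $+1$ no matter how you vary $d$ modulo $\ell$. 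Concretely, take $E$ of conductor $N_E=49$ and rank $0$ (such curves exist). Then for every positive squarefree $d$ coprime to $2N_E$ one has $\chi_d(-49)=\mathrm{sign}(d)\,\chi_d(7)^2=+1$, while your construction needs $\chi_d(-N_E)=-\epsilon=-1$. Since your final recipe takes $d=q$ a positive prime chosen by a class $c$ modulo $8N_E$, no choice of $c$ works, and the proof collapses exactly at the point where you wrote ``this follows because \dots is a nontrivial character.''

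The repair is the sign of $d$ itself, which you mention in passing but never build into the construction. The paper does this uniformly: it sets $\epsilon=(-1)^{1+\mathrm{rank}\,E(\Q)}$, uses Lemma \ref{prime_sys_lem} to find primes $q\equiv\epsilon\Mod{8p_1\cdots p_v}$ and $q\equiv\epsilon m\Mod p$, and puts $d=\epsilon q$. Then $d\equiv 1\Mod{8p_1\cdots p_v}$ forces every finite factor $\chi_d(2)$, $\chi_d(p_i)$ to equal $+1$ (this also sidesteps your unresolved issue that the listed formula for $\chi_d(2)$ only applies when $d\equiv1\Mod 4$), so $\chi_d(-N_E)=\mathrm{sign}(d)=\epsilon$ and the Parity Conjecture gives odd rank for $E_d$ with no case analysis on the factorization of $N_E$. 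If you want to salvage your version, you must allow $d=-q$ (requiring $q\equiv-m\Mod p$, etc.) and prove the existence of an admissible class taking the sign into account; as written, the existence claim is false and the lemma is not proved.
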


\begin{proof}
Let \[N_E=2^e\cdot\prod_{i=1}^vp_i^{e_i}\] be the prime factorization of the conductor of $E$, and set $\epsilon=(-1)^{1+\text{rank\;} E(\Q)}$. By Lemma \ref{prime_sys_lem}, there exist infinitely many primes $q$ satisfying
\[q\equiv\epsilon\Mod{8p_1\cdots p_v} \text{\;\;and\;\;}q\equiv \epsilon m\Mod p.\]

Fix any such prime $q$. Letting $d=\epsilon\cdot q$, we have
\[d\equiv1\Mod {8p_1\cdots p_v} \text{\;\;and\;\;} d\equiv m\Mod p.\]
Note that $d$ is squarefree and coprime to $2\cdot N_E$. The properties of the character $\chi_d$ imply that
\[\chi_d(-N_E)=\chi_d(-1)\chi_d(2)^e\prod_{i=1}^v\left(\frac{d}{p_i}\right)^{e_i}=\chi_d(-1)=\text{sign}(d)=\epsilon.\]

Hence, by the Parity Conjecture,

\[(-1)^{\text{rank\;} E_d(\Q)}=(-1)^{\text{rank\;} E(\Q)}\cdot\epsilon=(-1)^{\text{rank\;} E(\Q)}(-1)^{1+\text{rank\;} E(\Q)}=-1.\]

It follows that $E_d$ has odd, hence positive, rank. Since we have infinitely many choices for $q$, this argument yields infinitely many squarefree integers $d\equiv m\Mod p$ such that $E_d$ has positive rank.
\end{proof}

\begin{lem}\label{genus1_lem} Let $C/\Q$ be a hyperelliptic curve of genus 1, $p$ an odd prime of good reduction for $C$, and $m$ an integer coprime to $p$. If the Parity Conjecture holds, then there exist infinitely many squarefree integers $d\equiv m \;(\mathrm{mod}\; p)$ such that the Jacobian of $C_d$ has positive rank.
\end{lem}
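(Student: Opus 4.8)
The plan is to reduce the statement immediately to Lemma \ref{elliptic_lem} by passing to the Jacobian. Set $E=\Jac(C)$; since $C$ has genus $1$, this is an elliptic curve over $\Q$. The first step is to observe that $E$ has good reduction at $p$: because $C$ has good reduction at $p$, so does its Jacobian (the Jacobian of a smooth proper curve with good reduction at a prime inherits good reduction there), and therefore $p$ does not divide the conductor $N_E$.

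With this in hand, I would apply Lemma \ref{elliptic_lem} to the elliptic curve $E$, the prime $p$, and the integer $m$: assuming the Parity Conjecture, there exist infinitely many squarefree integers $d\equiv m\Mod p$ for which the quadratic twist $E_d$ has positive rank. It then remains to identify $\Jac(C_d)$ with $E_d$. Writing $C\colon y^2=f(x)$ with $f$ separable of degree $3$ or $4$, the twist $C_d\colon dy^2=f(x)$ becomes isomorphic to $C$ over $\Q(\sqrt d)$ via $(x,y)\mapsto(x,y/\sqrt d)$, and this isomorphism is $\Gal(\overline{\Q}/\Q)$-equivariant up to the quadratic character of $\Q(\sqrt d)$; hence the cocycle that twists $C$ into $C_d$ is the same one that twists $E=\Jac(C)$ into $\Jac(C_d)$, so $\Jac(C_d)\cong E_d$ over $\Q$. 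Consequently $\Jac(C_d)$ has positive rank for each of the infinitely many integers $d$ produced above, which proves the lemma.

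The only point requiring any care is the identification $\Jac(C_d)\cong\Jac(C)_d$, i.e. the compatibility of the Jacobian construction with quadratic twisting, together with the remark that good reduction of $C$ forces good reduction of $\Jac(C)$ so that the hypotheses of Lemma \ref{elliptic_lem} are met; everything else is a direct appeal to that lemma, and indeed the present statement is essentially its geometric reformulation in the genus-$1$ case.
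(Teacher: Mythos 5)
Your proposal is correct and follows essentially the same route as the paper: reduce to Lemma \ref{elliptic_lem} via $E=\Jac(C)$, note that good reduction of $C$ at $p$ gives good reduction of $E$ so $p\nmid N_E$, and conclude by identifying $\Jac(C_d)$ with $E_d$ (the paper cites the genus-1 Jacobian construction for this compatibility, while you sketch the twisting-cocycle argument, but the content is the same).
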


\begin{proof}
Let $E=\Jac(C)$, which is an elliptic curve over $\Q$. Since $C$ has good reduction modulo $p$, then $E$ also has good reduction modulo $p$. Hence, $p$ does not divide the conductor of $E$; see \cite[p. 256]{silverman}. By Lemma \ref{elliptic_lem}, there exist infinitely many squarefree integers $d\equiv m \;(\mathrm{mod}\; p)$ such that $E_d$ has positive rank. The result now follows by noting that $E_d=\Jac(C_d)$. (This can be deduced from the construction of the Jacobian of a of genus 1 curve; see \cite{mccallum} or \cite[Chap. 20]{cassels}.)
\end{proof}

\begin{prop}\label{degree3_prop}
Statement $\mathrm A(3)$ follows from the Parity Conjecture.
\end{prop}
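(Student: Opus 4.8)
The plan is to reduce Statement $\A(3)$ to the genus-1 case handled by Lemma \ref{genus1_lem}. By Lemma \ref{odd_imp}, it suffices to prove $\I(3)$: given a separable primitive cubic $h(x)\in\Z[x]$, a good prime $p$ for $h(x)$, and an integer $m$ coprime to $p$, we must produce infinitely many primes $q\in\calS(h)$ with $q\equiv m\Mod p$. The curve $C\colon y^2=h(x)$ is a hyperelliptic curve of genus 1, and since $p$ is good for $h(x)$ it has good reduction at $p$. The first step is to invoke Lemma \ref{genus1_lem} directly: under the Parity Conjecture there are infinitely many squarefree $d\equiv m\Mod p$ such that $\Jac(C_d)$ has positive rank.

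The crucial link is that positive rank of $\Jac(C_d)$ forces $C_d$ to have a rational point, hence $d\in\calS(h)$. For a genus-1 curve the Jacobian is its own Jacobian, so if $\Jac(C_d)(\Q)$ is infinite then $C_d$ is a torsor under a positive-rank elliptic curve; but a genus-1 curve with a rational point of infinite order on its Jacobian in fact has a rational point — indeed $C_d$ and $\Jac(C_d)$ have the same rational points up to the torsor structure, and positive Mordell--Weil rank prevents $C_d$ from being a nontrivial torsor of infinite order. More concretely, since $\deg h = 3$, the curve $C_d\colon dy^2 = h(x)$ already has the rational point at infinity available (the smooth model of an odd-degree genus-1 curve has a rational point), so $C_d$ \emph{is} an elliptic curve and $C_d \cong \Jac(C_d) = E_d$ where $E = \Jac(C)$; positive rank then yields infinitely many affine rational points $(x,y)$ with $h(x) = d y^2$ and $h(x)\ne 0$, so $S(h(x)) = d$ and $d\in\calS(h)$. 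One must check that the point at infinity is not the only rational point and that one can choose affine points avoiding the finitely many roots of $h$ — both follow from infinitude of $E_d(\Q)$.

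This gives infinitely many squarefree $d\in\calS(h)$ with $d\equiv m\Mod p$, but $\I(3)$ demands \emph{primes} $q$, not merely squarefree integers. This mismatch is the main obstacle, and I expect it is exactly the point the author's proof must finesse. Two routes suggest themselves. The first is to strengthen Lemma \ref{genus1_lem}: in its proof (via Lemma \ref{elliptic_lem}) the twisting parameter $d = \epsilon q$ is built from a single prime $q$ chosen by Dirichlet's theorem, so $d$ is already $\pm$ a prime; thus $d\in\calS(h)$ is, up to sign, prime. One then only needs that $q\in\calS(h)$ rather than $-q$, which can be arranged by tracking the sign: choosing $q\equiv \epsilon\Mod{8p_1\cdots p_v}$ can be replaced by a congruence forcing $\epsilon = +1$, i.e.\ arranging $\operatorname{rank} E(\Q)$ to have the needed parity via the other twist, or by passing to $-h$ and $C_{-d}$. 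The second route is the argument already used in Lemmas \ref{odd_imp}--\ref{even_imp}: once we know $\calS(h)$ contains a squarefree $d$ in the class $m$, we do not automatically get a prime, but combining with the freedom in $d$ and a sieving argument (Lemma \ref{prime_sys_lem}) one can sometimes replace a composite $d$ by a prime in the same class. I would pursue the first route, since the $d$ produced by Lemma \ref{elliptic_lem} is manifestly of the form $\pm q$; the only real work is bookkeeping on the sign $\epsilon$ and on which of $h$, $-h$ to apply the lemma to so that the prime $q$ itself (not $-q$) lies in $\calS(h)$, together with the standard verification that an odd-degree genus-1 curve of positive Jacobian rank has infinitely many affine rational points away from the roots of $h$.
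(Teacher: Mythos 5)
Your proposal contains the paper's actual argument, but buried inside an unnecessary reduction that you then cannot complete. Statement $\A(3)$ asks only for infinitely many \emph{squarefree integers} $d\in\calS(f)$ with $d\equiv m\Mod p$; it does not ask for primes. The paper's proof is exactly your first two paragraphs, applied directly to $f$ rather than to a primitive factor $h$: take $E$ to be the elliptic curve $y^2=f(x)$ (good reduction at $p$ since $p$ is good for $f$), apply Lemma \ref{genus1_lem} to get infinitely many squarefree $d\equiv m\Mod p$ with $\Jac(E_d)$ of positive rank, and observe that since $\deg f=3$ the twist $E_d$ has a rational point at infinity, hence is isomorphic to its Jacobian, hence has infinitely many rational points and in particular a nontrivial one, so $d\in\calS(f)$. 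No passage through primitivity or through Lemma \ref{odd_imp} is needed, since the genus-1 machinery applies to any separable cubic.

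The genuine gap is in the second half. By invoking Lemma \ref{odd_imp} you commit yourself to proving $\I(3)$, i.e.\ to producing \emph{prime} values $q\in\calS(h)$ in the class of $m$, and this you never establish; the paper never claims $\I(3)$ either. Your sketched repairs do not work as stated: in Lemma \ref{elliptic_lem} the sign $\epsilon=(-1)^{1+\operatorname{rank} E(\Q)}$ is dictated by the parity of the rank of $E$, not by a congruence condition you are free to impose on $q$, so you cannot simply ``force $\epsilon=+1$''; and when $\epsilon=-1$, passing to $-h$ shows only that $q\in\calS(-h)$ (equivalently $-q\in\calS(h)$), which is not the conclusion $\I(3)$ demands for $h$. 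A general sign fix via adjusting $d$ modulo $8$ and modulo the odd primes dividing $N_E$ also fails when, for example, $-N_E$ pairs trivially with all such characters. Since the detour through $\I(3)$ is both unproved and unnecessary, the fix is simply to delete it and let your genus-1 argument prove $\A(3)$ directly, as the paper does.
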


\begin{proof}
Let $f(x)\in\Z[x]$ be separable of degree 3, let $p$ be a good prime for $f(x)$, and let $m$ be any integer coprime to $p$. We must show, assuming the Parity Conjecture, that there are infinitely many elements $d\in\calS(f)$ such that $d\equiv m\Mod p$. Let $E$ be the elliptic curve over $\Q$ defined by the equation $y^2=f(x)$. Since $p$ is good for $f(x)$, $p$ is an odd prime of good reduction for $E$. Hence, by Lemma \ref{genus1_lem}, the Parity Conjecture implies that there exist infinitely many squarefree integers $d\equiv m\Mod p$ such that the Jacobian of $E_d$ has positive rank. However, for every such $d$, $E_d$ is an elliptic curve and thus isomorphic to its Jacobian; therefore, $E_d$ has positive rank. In particular, $E_d$ must have a nontrivial rational point, and so $d\in\calS(f)$.
\end{proof}

\begin{rem}\label{twist_torsion}
In the proof of Proposition \ref{degree3_prop} we only need the existence of a nontrivial rational point on the twist $E_d$, while the Parity Conjecture (via Lemma \ref{elliptic_lem}) produces a seemingly much stronger result, namely that $E_d$ has positive rank. It is natural to wonder whether one can avoid recourse to the conjecture by only proving the existence of a nontrivial torsion point on $E_d$. Unfortunately, this approach will not work. It follows from a theorem of Silverman \cite[Thm. 6]{silverman_heights} that if $E$ is any elliptic curve over $\Q$, then all but finitely many twists $E_d$ have the property that the torsion subgroup of $E_d(\Q)$ contains only 2-torsion points. Since 2-torsion points on an elliptic curve are -- using our terminology -- trivial rational points, this leaves only a finite number of twists that might have a nontrivial rational torsion point. Hence, we cannot expect by this method to produce the infinitely many integers $d$ required to prove A(3).
\end{rem}

We now discuss our main conjecture in the case of polynomials of degree 4. Though we expect that the statement A(4) holds, it does not seem immediate to deduce it from the Parity Conjecture. In particular, if the proof of Proposition \ref{degree3_prop} is followed with a polynomial $f(x)$ of degree 4 instead of degree 3, the result will be a statement weaker than A(4): instead of obtaining twists $C_d$ with a nontrivial rational point, we would obtain twists $C_d$ whose Jacobian has a nontrivial rational point. There is, nevertheless, one instance in which we can deduce A(4) from the Parity Conjecture:

\begin{prop}\label{degree4_prop}
Assume the Parity Conjecture. Then Statement $\mathrm A(4)$ holds if we restrict attention to polynomials $f(x)$ having a rational root. 
\end{prop}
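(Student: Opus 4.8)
The plan is to use the rational root to reduce Statement $\A(4)$ (for such $f$) to the genus-$1$ result already established in Lemma \ref{genus1_lem}. The key observation is that a rational root of $f(x)$ furnishes a rational point not merely on $C\colon y^2=f(x)$ but on \emph{every} quadratic twist $C_d$, and this is exactly what lets one convert the ``positive rank of $\Jac(C_d)$'' output of Lemma \ref{genus1_lem} into a genuine nontrivial rational point of $C_d$.

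First I would fix the setup. Since $f(x)$ is separable of degree $4$, the curve $C\colon y^2=f(x)$ is a hyperelliptic curve of genus $1$. Let $\rho\in\Q$ be a rational root of $f(x)$; separability makes $\rho$ a simple root, so $(\rho,0)$ is a smooth rational point of $C$, and since $d\cdot 0^2=0=f(\rho)$ for any squarefree $d$, the point $(\rho,0)$ is also a smooth rational point of $C_d\colon dy^2=f(x)$. Therefore each $C_d$ is an elliptic curve and is isomorphic over $\Q$ to $\Jac(C_d)$; in particular $\operatorname{rank} C_d(\Q)=\operatorname{rank}\Jac(C_d)(\Q)$, so $C_d(\Q)$ is infinite whenever $\Jac(C_d)$ has positive rank.

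Now let $p$ be a good prime for $f(x)$ and $m$ an integer coprime to $p$. Good primes are odd, and ``$p$ good for $f(x)$'' forces $C$ to have good reduction at $p$, so $p$ is an odd prime of good reduction for the genus-$1$ curve $C$. Assuming the Parity Conjecture, Lemma \ref{genus1_lem} supplies infinitely many squarefree integers $d\equiv m\Mod p$ with $\Jac(C_d)$ of positive rank; for each such $d$, the set $C_d(\Q)$ is infinite by the previous step. As $C_d$ has only finitely many trivial rational points (at most two at infinity together with the at most four affine points where $f(x)=0$), infinitely many of its rational points are nontrivial affine points. Choosing such a point $(r,w)$ with $f(r)\ne0$ gives $f(r)=dw^2$, so $S(f(r))=d$ and hence $d\in\calS(f)$. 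Running $d$ over the sequence from Lemma \ref{genus1_lem} then exhibits infinitely many elements of $\calS(f)$ congruent to $m$ modulo $p$, which is Statement $\A(4)$ restricted to polynomials with a rational root.

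I do not expect a real obstacle; the work is in correctly wiring together existing results. The only delicate point is the implication ``$\Jac(C_d)$ has positive rank $\Rightarrow$ $C_d$ has a nontrivial rational point,'' and it is precisely here that the rational-root hypothesis is indispensable: it makes $C_d\cong\Jac(C_d)$, transporting positive rank --- and therefore infinitely many rational points --- onto $C_d$ itself, after which the finitely many trivial points are discarded harmlessly. Without a rational root one can control only $\Jac(C_d)$, which carries no evident rational point of $C_d$; this is exactly the gap the authors flag for general quartics. The auxiliary facts used --- that ``$p$ good for $f(x)$'' yields good reduction of $C$, and that this suffices to invoke Lemma \ref{genus1_lem} --- are routine, the conductor bookkeeping having already been carried out inside that lemma.
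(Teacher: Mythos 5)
Your proposal is correct and follows essentially the same route as the paper: the rational root gives a rational point (with $y=0$) on every twist $C_d$, so $C_d$ is an elliptic curve isomorphic to its Jacobian, and Lemma \ref{genus1_lem} (via the Parity Conjecture) then yields infinitely many suitable $d$ with $C_d(\Q)$ infinite, from which nontrivial points are extracted after discarding the finitely many trivial ones. Your write-up simply makes a couple of steps (simplicity of the root, the count of trivial points) more explicit than the paper does.
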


\begin{proof}
Let $f(x)\in\Z[x]$ be a separable polynomial of degree 4 with a rational root, let $p$ be a good prime for $f(x)$, and let $m$ be any integer coprime to $p$. We must show that there are infinitely many elements $d\in\calS(f)$ such that $d\equiv m\Mod p$. Let $C$ be the hyperelliptic curve of genus 1 defined by the equation $y^2=f(x)$. Note that every quadratic twist of $C$ has a rational point with $y=0$. By Lemma \ref{genus1_lem} there exist infinitely many squarefree integers $d\equiv m\Mod p$ such that the Jacobian of $C_d$ has positive rank. Since $C_d$ has a rational point, it can be given the structure of an elliptic curve, and is therefore isomorphic to its Jacobian. Hence, $C_d$ has infinitely many rational points. In particular, it has a nontrivial rational point, so $d\in\calS(f)$.
\end{proof}

We conclude this section with an application to congruent numbers.

\begin{thm}\label{cong_thm} 
For every prime number $p$ there exist infinitely many congruent numbers divisible by $p$. Furthermore, assuming the Parity Conjecture, every residue class modulo $p$ contains infinitely many congruent numbers.
\end{thm}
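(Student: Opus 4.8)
The plan is to reduce everything to the set $\calS(f)$ for $f(x)=x^3-x$, using the classical facts that an integer is congruent if and only if its squarefree part is, and that a positive squarefree integer $d$ is congruent if and only if $d\in\calS(f)$; see \cite[Chap.~1]{koblitz}. Two elementary observations recur. First, the squarefree part of an integer is unchanged under multiplication by a perfect square, so every square multiple of a congruent number is again congruent. Second, the substitution $(x,y)\mapsto(-x,y)$ carries the curve $dy^2=f(x)$ to $(-d)y^2=f(x)$ and takes trivial rational points to trivial rational points, so $d\in\calS(f)\iff -d\in\calS(f)$; in particular $\calS(f)$ is stable under negation, and its positive members are precisely the squarefree congruent numbers.

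For the first assertion I would note that $6$ is congruent, being the area of the $(3,4,5)$ right triangle; hence the numbers $6p^{2k}$ with $k\ge 1$ are pairwise distinct congruent numbers divisible by $p$. (For odd $p$, one can instead produce infinitely many \emph{squarefree} congruent numbers divisible by $p$ by applying part (1) of Theorem \ref{0_thm} to the odd-degree polynomial $f$ and passing to absolute values via the negation symmetry.) For the second assertion, assume the Parity Conjecture and fix a residue class $m$ modulo $p$. If $p\mid m$ the claim is the first assertion. If $p=2$, then $m$ is odd, and the numbers $5\cdot(2k-1)^2$ with $k\ge 1$ are pairwise distinct odd congruent numbers, since $5$ is congruent. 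Now suppose $p$ is odd and $p\nmid m$; then $p$ is good for $f$ (as $\operatorname{disc}(f)=4$), and, since $1$ is not a congruent number (Fermat), the elliptic curve $E\colon y^2=x^3-x$ has rank $0$. I would run the proof of Proposition \ref{degree3_prop} --- which for this $f$ comes down to the proof of Lemma \ref{elliptic_lem} applied to $E$ --- with the residue class $-m$ in place of $m$. Because $\operatorname{rank}E(\Q)=0$, the sign $\epsilon=(-1)^{1+\operatorname{rank}E(\Q)}$ appearing in that proof equals $-1$, so the construction yields infinitely many squarefree integers $d$ with $E_d$ of positive rank, each of the form $d=-q$ with $q$ prime and $q\equiv m\Mod p$. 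A point of infinite order on $E_d$ is a nontrivial rational point on $dy^2=f(x)$, so $d\in\calS(f)$; by the negation symmetry $q=-d\in\calS(f)$ as well, and being a positive squarefree integer, $q$ is a congruent number with $q\equiv m\Mod p$. Letting $q$ vary gives infinitely many congruent numbers in the class of $m$, as desired.

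The one real obstacle is the issue of signs. Statement $\A(3)$ supplies elements of $\calS(f)$ in a prescribed residue class but with no control over their sign, whereas congruent numbers must be positive; and $\A(3)$ together with the negation symmetry alone does not force a positive representative in the class of $m$ --- it only forces one in the class of $m$ or the class of $-m$. The resolution is to observe that the construction underlying $\A(3)$ in fact outputs twisting parameters of a single sign $\epsilon$, and that for the congruent-number curve $y^2=x^3-x$ the input $\operatorname{rank}E(\Q)=0$ pins down $\epsilon=-1$; feeding the construction the class $-m$ and then negating lands in the class $m$ with positive sign.
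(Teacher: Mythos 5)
Your proof is correct, and its core (conditional) part is essentially the paper's own argument: run the construction of Proposition \ref{degree3_prop} (i.e.\ Lemma \ref{elliptic_lem} for $E\colon y^2=x^3-x$) at the class $-m$, use $\operatorname{rank}E(\Q)=0$ to force $\epsilon=-1$ so that all twisting parameters produced are negative, and then negate using $f(-x)=-f(x)$ to land on positive squarefree elements of $\calS(f)$ in the class of $m$; you spell out the sign bookkeeping that the paper treats tersely, and it is exactly right. Where you genuinely diverge is in the unconditional claims. For divisibility by $p$ the paper invokes Theorem \ref{0_thm} (odd degree) together with the negation symmetry, which yields infinitely many \emph{squarefree} positive congruent numbers divisible by $p$; you instead observe that $6p^{2k}$ is congruent for all $k$, which is much cheaper but produces non-squarefree representatives --- perfectly adequate for the statement as written, though weaker as a structural result about $\calS(f)$. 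Likewise for $p=2$ the paper appeals to Monsky's theorem (classes $5,6,7\Mod 8$), while your $5(2k-1)^2$ and $6\cdot 4^k$ trick settles both residue classes modulo $2$ elementarily and unconditionally. One small remark: your inference that $E$ has rank $0$ from Fermat's theorem that $1$ is not congruent implicitly uses that $E(\Q)_{\mathrm{tors}}$ consists only of $2$-torsion (trivial points), so that a nontrivial point would have infinite order; this is standard and consistent with how the paper simply asserts the rank, but it is worth stating if you want the step airtight.
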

\begin{proof}
If $p=2$, this follows (unconditionally) from Monsky's result cited in the introduction. Now fix an odd prime $p$. Recall that a positive squarefree integer $d$ is congruent if and only if $d\in\calS(f)$, where $f(x)=x^3-x$. The discriminant of $f(x)$ is 4, so $p$ is good for $f(x)$.  Since $f(x)$ has odd degree, Theorem \ref{0_thm} implies that there are infinitely many elements of $\calS(f)$ that are divisible by $p$. Note that $f(-x)=-f(x)$, so that the set $\calS(f)$ is closed under taking additive inverses. Hence, we can conclude that there are infinitely many \textit{positive} numbers in $\calS(f)$ divisible by $p$.

To complete the proof we must show, assuming the Parity Conjecture, that every nonzero residue class $[m]$ modulo $p$ contains infinitely many congruent numbers.  The rank of the elliptic curve $E$ defined by the equation $y^2=x^3-x$ is 0, so the proof of Proposition \ref{degree3_prop} can be used to produce infinitely many negative numbers $d\in\calS(f)$ such that $d\equiv -m\Mod p$.  For any such $d$ we have $(-d)\in\calS(f)$, $-d\equiv m\Mod p$, and $-d>0$. This shows that there are infinitely many congruent numbers in $[m]$.
\end{proof}

\section{A local analogue}\label{local_section}

Let $f(x)\in\Z[x]$ be a separable nonconstant polynomial. In the previous section we provided evidence for Conjecture \ref{main_conj} assuming $f(x)$ has degree $\le 4$. For polynomials of higher degree we cannot prove our conjecture or even show that it would be implied by standard conjectures in arithmetic or Diophantine geometry. Hence, we consider instead a weaker statement: our main goal in this section is to show that there is no local obstruction to Conjecture \ref{main_conj}. 

Let $C$ be the hyperelliptic curve $y^2=f(x)$, let $p$ be a prime, and let $m$ be an integer coprime to $p$. Our conjecture states that if $p$ is large enough, then there exist infinitely many squarefree integers $d\equiv m\Mod p$ such that the curve $C_d$ has a nontrivial rational point. We will prove a related statement  in which the global condition that $C_d$ have a nontrivial rational point is replaced by the local condition that it have a nontrivial point over every completion of $\Q$.

For the convenience of the reader, we recall here a few standard results to be used.

\begin{hw_thm}\label{hasse_weil}
Let $X$ be a smooth, irreducible, projective curve of genus $g$ over a finite field $\F_{\l}$. Then \[|\#X(\F_{\l})-(\l+1)|\leq 2g\sqrt \l.\]
\end{hw_thm}

\begin{proof}
See \cite[\S 9.2]{hkt}.
\end{proof}

\begin{hensel_lem}
Let $\l$ be a prime number, and let $P(t_1,\ldots, t_n)\in\Z_{\l}[t_1,\ldots, t_n]$. Suppose $\alpha\in\F_{\l}^n$ is such that $\overline P(\alpha)=0$ and $\nabla \overline P(\alpha)\ne 0$. Then there exists $x\in\Z_{\l}^n$ such that $P(x)=0$ and $\overline x=\alpha$.
\end{hensel_lem}

\begin{proof}
See \cite[p. 15, Cor. 1]{serre}.
\end{proof}

\begin{lem}[Squares in $\l$-adic fields]\label{adic_square_lem}
Let $x\in\Q_{\l}^{\ast}$ and write $x=\l^nu$, where $n\in\Z$ and $u\in\Z_{\l}^{\times}$. Then $x$ is a square in $\Q_{\l}$ if and only if $n$ is even and the following holds:
\begin{itemize}
\item If $\l$ is odd, then $\overline u$ is a square in $\F_{\l}$.
\item If $\l=2$, then $u\equiv 1\;(\mathrm{mod\;} 8)$.
\end{itemize}
\end{lem}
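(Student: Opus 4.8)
The plan is to strip off the power of $\l$ and reduce everything to a statement about units, which is then handled by Hensel's Lemma.

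First I would deal with the power of $\l$. If $x=\l^n u$ equals $y^2$ for some $y\in\Q_\l^{\times}$, write $y=\l^k v$ with $v\in\Z_\l^{\times}$; comparing valuations forces $n=2k$, and the uniqueness of the decomposition of a nonzero $\l$-adic number as a power of $\l$ times a unit then gives $u=v^2$. Conversely, when $n$ is even the factor $\l^n$ is itself a square, so $x$ is a square in $\Q_\l$ precisely when $u$ is. Thus both directions reduce to the assertion that a unit $u\in\Z_\l^{\times}$ is a square in $\Z_\l$ if and only if $\overline u$ is a square in $\F_\l$ (for $\l$ odd), respectively $u\equiv 1\Mod 8$ (for $\l=2$). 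The necessity half of this reduced claim is immediate: if $u=v^2$ with $v$ a unit, then $\overline u=\overline{v}^{\,2}$ is a square in $\F_\l$, and when $\l=2$ the unit $v$ is odd, so $v^2\equiv 1\Mod 8$.

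For sufficiency with $\l$ odd, I would apply Hensel's Lemma to $P(t)=t^2-u\in\Z_\l[t]$ at a square root $\alpha\in\F_\l$ of $\overline u$: then $\overline P(\alpha)=\alpha^2-\overline u=0$ and $\nabla\overline P(\alpha)=2\alpha\ne 0$, since $\l\ne 2$ and $\alpha\ne 0$ (as $\overline u$ is a unit). Hence $P$ has a root in $\Z_\l$, i.e.\ $u$ is a square.

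The hard part will be the case $\l=2$, where the naive form of Hensel's Lemma stated above cannot be applied to $t^2-u$ directly, because its derivative $2t$ reduces to $0$ in $\F_2[t]$. The fix is a change of variables: writing $u=1+8c$ with $c\in\Z_2$ (possible since $u\equiv 1\Mod 8$) and looking for a square root of the form $y=1+2w$, the identity $y^2=1+4(w^2+w)$ turns the equation $y^2=u$ into $w^2+w-2c=0$. Now Hensel's Lemma does apply to $Q(t)=t^2+t-2c\in\Z_2[t]$ at $\alpha=0$, since $\overline Q(0)=0$ and $\nabla\overline Q(0)=1\ne 0$ in $\F_2$; this yields $w\in\Z_2$, and hence $y=1+2w\in\Z_2$ with $y^2=u$. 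Apart from spotting this substitution, every step is routine.
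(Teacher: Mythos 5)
Your proof is correct. Note that the paper does not prove this lemma at all: it simply cites Serre (\emph{A Course in Arithmetic}, p.~17, Thm.~3 and p.~18, Thm.~4), where the result is established via the structure of the unit group $\Z_{\l}^{\times}$. Your argument is a self-contained version of essentially the same standard proof, and it has the virtue of using only the multivariable Hensel's Lemma already quoted in the paper: the valuation comparison correctly reduces both directions to the unit case, the odd-$\l$ case is a direct Hensel application to $t^2-u$, and your substitution $y=1+2w$ converting $y^2=1+8c$ into $w^2+w-2c=0$ is exactly the right way to circumvent the vanishing of the derivative $2t$ in characteristic $2$ (one checks $\overline{Q}(0)=0$ and $\overline{Q}'(0)=1$ since $2c\equiv 0\Mod 2$, as you do). The necessity direction for $\l=2$ is also complete, since any unit $v=1+2w\in\Z_2^{\times}$ satisfies $v^2=1+4w(w+1)\equiv 1\Mod 8$. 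So the only difference from the paper is that you supply the proof the paper outsources to Serre.
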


\begin{proof}
See p. 17, Thm. 3 and p. 18, Thm. 4 in \cite{serre}.
\end{proof}

\begin{lem}\label{N0_def} Let $g$ be the genus of $C$. There exists an integer $N_0$ such that every prime $\l\ge N_0$ satisfies
\begin{enumerate}
\item[(a)] $\l$ is good for $f(x)$; and
\item[(b)] for every smooth projective curve $X/\F_{\l}$ of genus $g$, $\#X(\F_{\l})\ge 2g+5$.
\end{enumerate}
\end{lem}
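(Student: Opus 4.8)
The plan is to verify the two conditions separately and then take $N_0$ to be the larger of the two resulting thresholds.

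For condition (a), recall that an odd prime $\l$ can fail to be good for $f(x)$ only if $\l$ divides the leading coefficient of $f(x)$ or $\l$ divides the discriminant of $f(x)$; indeed, if $\l$ divides neither, then $\bar f(x)\in\F_{\l}[x]$ has the same degree as $f(x)$ and its discriminant is the nonzero reduction of $\mathrm{disc}(f)$. Since $f(x)$ is nonconstant and separable, both the leading coefficient and $\mathrm{disc}(f)$ are nonzero integers, so only finitely many primes fail condition (a). Let $N_1$ be any integer greater than all of them (and greater than $2$).

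For condition (b), I would invoke the Hasse-Weil bound: every smooth projective curve $X/\F_{\l}$ of genus $g$ satisfies $\#X(\F_{\l})\ge \l+1-2g\sqrt{\l}$. It therefore suffices to choose $\l$ large enough that $\l+1-2g\sqrt{\l}\ge 2g+5$. Completing the square, this inequality is equivalent to $(\sqrt{\l}-g)^2\ge g^2+2g+4$, which for $\l>g^2$ holds precisely when $\sqrt{\l}\ge g+\sqrt{g^2+2g+4}$. Since $g$ is a nonnegative integer fixed in advance, the quantity $N_2:=\big(g+\sqrt{g^2+2g+4}\big)^2$ is an explicit real number, and every prime $\l\ge N_2$ satisfies condition (b).

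Finally, one sets $N_0$ to be the least integer that is at least $\max(N_1,N_2)$; then every prime $\l\ge N_0$ satisfies both (a) and (b). There is no genuine obstacle here: the argument is a direct combination of the elementary description of good primes with the Hasse-Weil estimate. The only point requiring a little care is the elementary manipulation that converts Hasse-Weil into an explicit lower bound on $\l$, together with the observation that, because $g$ is determined by $C$ in advance, the resulting $N_0$ is a genuine (effectively computable) constant as claimed.
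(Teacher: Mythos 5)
Your proof is correct and follows essentially the same route as the paper: finitely many primes divide the leading coefficient or discriminant of $f(x)$, which handles (a), and the Hasse--Weil bound handles (b). Your explicit threshold $\bigl(g+\sqrt{g^2+2g+4}\,\bigr)^2$ is consistent with (and slightly sharper than) the paper's stated sufficient bound $N_0\ge 4g^2+6g+4$.
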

\begin{proof}
For (a) to hold we take $N_0$ to be larger than every prime dividing the discriminant or the leading coefficient of $f(x)$. For (b), the existence of $N_0$ is guaranteed by the Hasse-Weil bound; for instance, it suffices to have $N_0\ge 4g^2+6g+4$.
\end{proof}

\begin{lem}\label{local_lem}
With $N_0$ as in Lemma \ref{N0_def}, let $d$ be a squarefree integer and let $\l\ge N_0$ be prime. 
\begin{enumerate}
\item If $f(x)$ has odd degree, then the curve $C_d$ has a nontrivial point defined over $\Q_{\l}$.
\item Suppose $f(x)$ has even degree. If $\l\nmid d$, or if $\l | d$ and $f(x)$ has a root modulo $\l$, then $C_d$ has a nontrivial point defined over $\Q_{\l}$.
\end{enumerate}
\end{lem}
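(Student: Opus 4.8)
The plan is to produce, in every case, an affine point $(x_0,y_0)$ of $C_d\colon dy^2=f(x)$ with coordinates in $\Q_\l$ and $f(x_0)\ne0$; the split is according to whether $\l\mid d$. Note first that $\l\ge N_0$ forces $\l$ to be odd (in fact $\l\ge5$) and good for $f(x)$, so $\bar f(x)\in\F_\l[x]$ is separable of the same degree as $f(x)$, and the ``$\l$ odd'' clause of Lemma \ref{adic_square_lem} is available. Suppose $\l\nmid d$ (this covers the relevant half of part (1) and the first alternative of part (2)). Then, since $\bar d\ne0$, the equation $\bar d\,y^2=\bar f(x)$ --- equivalently $Y^2=\bar d\,\bar f(x)$ after scaling $y$ --- is the affine part of a smooth projective hyperelliptic curve $X/\F_\l$ of genus $g$, so $\#X(\F_\l)\ge 2g+5$ by Lemma \ref{N0_def}(b). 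Among these points the trivial ones are the at most two points at infinity together with the affine points having $y=0$, one per $\F_\l$-root of $\bar f$ and hence at most $\deg f\le 2g+2$ of them; thus at most $2g+4<2g+5$ points are trivial, so there is $(\bar x_0,\bar y_0)\in\F_\l^2$ with $\bar d\,\bar y_0^2=\bar f(\bar x_0)\ne0$, forcing $\bar y_0\ne0$. The $y$-derivative of $P(x,y)=dy^2-f(x)$ is $2dy\ne0$ at $(\bar x_0,\bar y_0)$ since $\l$ is odd, so Hensel's Lemma lifts this point to a $\Z_\l$-point of $C_d$, which is nontrivial because $f(x_0)\equiv\bar f(\bar x_0)\not\equiv0\Mod{\l}$.

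Now suppose $\l\mid d$; since $d$ is squarefree, $\ord_\l(d)=1$, and I write $d=\l d'$ with $d'\in\Z_\l^\times$. If $\deg f=2g+1$ is odd, I would look for a point near infinity. The substitution $x=1/u$ turns $dy^2=f(x)$ into $d\,u^{2g+1}y^2=F(u)$ with $F(u)=u^{2g+1}f(1/u)$, so that $F(0)=a_{2g+1}$, the leading coefficient of $f$, is a unit modulo $\l$. Taking $u=\l s$ with $s\in\Z_\l^\times$ yields $y^2=F(\l s)/(\l^{2g+2}d's^{2g+1})$, whose valuation $-(2g+2)$ is even and whose unit part reduces modulo $\l$ to $a_{2g+1}/(d'\bar s^{\,2g+1})$; because $2g+1$ is odd, this lies in the same square class in $\F_\l^\times$ as $a_{2g+1}d'\bar s$. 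Choosing $\bar s$ so that $a_{2g+1}d'\bar s$ is a nonzero square modulo $\l$ --- a nonempty coset of the squares --- makes the right side a square in $\Q_\l$ by Lemma \ref{adic_square_lem}, and its square root $y$ gives the point $(1/(\l s),y)$, which is nontrivial since $\ord_\l\bigl(f(1/(\l s))\bigr)=-(2g+1)$ is finite.

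If instead $\deg f=2g+2$ is even and $f(x)$ has a root modulo $\l$, I would perturb an integer. By Lemma \ref{roots_lem_oneprime} there is $n\in\Z$ with $k:=\ord_\l(f(n))$ odd and $\l\nmid f'(n)$. From the Taylor expansion $f(n+v)=f(n)+f'(n)v+zv^2$ with $z\in\Z$, taking $v=c\l^k$ for $c\in\Z$ makes the quadratic term have valuation $\ge2k>k$, so $\ord_\l f(n+c\l^k)=k$ whenever $f(n)\l^{-k}+f'(n)c\not\equiv0\Mod{\l}$, in which case the unit part of $f(n+c\l^k)$ is congruent to $f(n)\l^{-k}+f'(n)c$ modulo $\l$. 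Since $f'(n)$ is a unit, $c$ can be chosen so that $f(n)\l^{-k}+f'(n)c$ equals any prescribed $w'\in\F_\l^\times$; take $w'$ with $w'd'$ a nonzero square modulo $\l$. Then $f(n+c\l^k)/d$ equals $\l^{k-1}\,w'/d'$ up to a square unit, of even valuation with square unit part, hence a square in $\Q_\l$ by Lemma \ref{adic_square_lem}; its square root $y$ gives the nontrivial $\Q_\l$-point $(n+c\l^k,y)$ on $C_d$.

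I expect the two $\l\nmid d$ cases to be routine once the trivial points --- points at infinity plus zeros of $\bar f$ --- are counted carefully against the bound $2g+5$ of Lemma \ref{N0_def}(b). The crux is the $\l\mid d$ analysis, where there is no good reduction to exploit and one must not only make an $\l$-adic valuation even but also steer the unit part into the correct square class modulo $\l$; the scaling parameter $s$ in the odd-degree case and the perturbation parameter $c$ in the even-degree case are introduced precisely to supply that extra freedom. This is also where the hypothesis in part (2) that $f(x)$ have a root modulo $\l$ is indispensable: it is exactly what Lemma \ref{roots_lem_oneprime} requires in order to furnish an integer $n$ with $\ord_\l(f(n))$ odd, which has no analogue when $f(x)$ has no root modulo $\l$.
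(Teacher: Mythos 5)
Your proof is correct and follows essentially the same route as the paper: good reduction plus the Hasse--Weil count and Hensel lifting when $\l\nmid d$, a point with $\ord_{\l}(x)=-1$ (near infinity) for odd degree with $\l\mid d$, and the Taylor-expansion perturbation based on Lemma \ref{roots_lem_oneprime} for even degree with $\l\mid d$. The only cosmetic difference is that you steer the unit into the right square class via free parameters ($s$, resp.\ $w'$), whereas the paper makes specific choices (evaluating at $x=1/(ad)$ so the unit is automatically $\equiv a^2$, resp.\ forcing $d\cdot f(r)$ to have unit part $\equiv 1$).
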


\begin{proof}
Suppose first that $\l$ does not divide $d$. Since $\l$ is good for $f(x)$, the equation $dy^2=\bar f(x)$ defines a hyperelliptic curve of genus $g$ over $\F_{\l}$, which we denote by $\widetilde{C}_d$. By the definition of $N_0$ we have $\#\widetilde{C}_d(\F_{\l})\ge 2g+5$. The curve $\widetilde{C}_d$ can have at most 2 points at infinity, so it must have at least $2g+3$ affine points defined over $\F_{\l}$. At most $2g+2$ of these points can be of the form $(\alpha,0)$, so there must be a point $(\alpha,\beta)$ with $\beta\ne 0$. Let $F(x,y)=dy^2-f(x)\in\Z_{\l}[x,y]$ and note that
\[\overline F(\alpha,\beta)=0\text{\;\;and\;\;}\nabla \overline F(\alpha,\beta)=\left(-\bar f '(\alpha),2d\beta\right)\ne (0,0)\]
since $\bar f(x)$ has no repeated root. Hensel's Lemma implies that the point $(\alpha,\beta)$ lifts to a point $(a,b)\in\Z_{\l}^2$ with $db^2=f(a)$. Moreover, since $\beta\ne 0$, we must have $b\ne 0$, so $(a,b)$ is a nontrivial point in $C_d(\Q_{\l})$. Thus we have shown that, regardless of what the degree of $f(x)$ is, if $\l$ does not divide $d$, then $C_d$ has a nontrivial point defined over $\Q_{\l}$.

Now suppose that $\l$ divides $d$.

\underline{\textit{Case 1}}: $f(x)$ has odd degree. Let $F(x)=x^{2g+2}\cdot f(1/x)\in\Z[x]$, and let $a$ be the leading coefficient of $f(x)$. A simple calculation shows that we can write $F(ad)=db$, where $b\in\Z $ satisfies $b\equiv a^2\Mod \l$. Thus, $b$ is a nonzero square modulo $\l$. By Lemma \ref{adic_square_lem} there exists a (nonzero) element $y\in\Q_{\l}$ such that $y^2=b$. We have $F(ad)=dy^2$, and so 
\[f(1/ad)=F(ad)/(ad)^{2g+2}=dy^2/(ad)^{2g+2}=d(y/(ad)^{g+1})^2.\]
This shows that the curve $C_d$ has a nontrivial point defined over $\Q_{\l}$.

\underline{\textit{Case 2}}: $f(x)$ has even degree. In this case we assume that $f(x)$ has a root modulo $\l$.  We will show that there is an integer $r$ such that $d\cdot f(r)$ is a nonzero square in $\Q_{\l}$, and this will complete the proof. By Lemma \ref{roots_lem_oneprime} there exists an integer $n$ such that $\ord_{\l}(f(n))$ is odd and $\l\nmid f'(n)$. Write
\[f(n)=\l^{2k-1}s \text{\;\;and\;\;} d=\l t \text{\;\;with\;\;} \l\nmid st.\]
Since $\l$ does not divide $t\cdot f'(n)$, we can choose an integer $b$ such that
\begin{equation}\label{btst}
bt\cdot f'(n)\equiv 1-st\Mod\l.
\end{equation}
Let $a=\l^{2k-1}b$ and $r=n+a$. Using a Taylor expansion we see that
\[f(r)=f(n+a)=f(n)+f'(n)\cdot a+z\cdot a^2\] 
for some integer $z$. We may write $z\cdot a^2=\l^{4k-2}c$ for some $c\in\Z$. Then
\[d\cdot f(r)=d\cdot f(n)+f'(n)(ad)+d(za^2)=\l^{2k}st+f'(n)\l^{2k}bt+\l^{4k-1}ct.\]
Letting $m=st+f'(n)bt+\l^{2k-1}ct$ we have, by \eqref{btst}, 
\[d\cdot f(r)=\l^{2k}\cdot m \text{\;\;and\;\;} m\equiv st+f'(n)bt\equiv 1\Mod{\l}.\]
Lemma \ref{adic_square_lem} then implies that $d\cdot f(r)$ is a square in $\Q_{\l}^{\ast}$.
\end{proof}

We can now prove the local analogue of Conjecture \ref{main_conj} for polynomials of odd degree.

\begin{thm}\label{local_result_odd}
Fix $N_0$ as in Lemma \ref{N0_def}. Suppose that $f(x)$ has odd degree. Let $p\ge N_0$ be prime, and let $m$ be an integer coprime to $p$. There exist infinitely many squarefree integers $d\equiv m \;(\mathrm{mod}\; p)$ such that the curve $C_d$ has a nontrivial point over every completion of $\Q$.
\end{thm}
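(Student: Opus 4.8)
The plan is to build the required integers $d$ by combining an archimedean sign condition, a congruence at $p$, and the local conditions at small primes, and then invoke Lemma \ref{local_lem} to handle all primes $\ell\ge N_0$ automatically. First I would fix a sign $\sigma\in\{\pm 1\}$ such that $f(x)$ takes the value $\sigma$ somewhere on $\Q_\infty=\R$ with $f$ nonzero there — since $f$ has odd degree, $f$ is surjective onto $\R$, so in fact both signs are attainable; this guarantees that $C_d$ has a nontrivial real point whenever $\operatorname{sign}(d)=\sigma$ (solve $dy^2=f(r)$ over $\R$). More importantly, for $d>0$ the curve $C_d$ always has a real point regardless, so the archimedean place is never an obstruction for odd degree; I would simply take $d>0$.

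Next I would list the finitely many "bad" primes: the primes $\ell<N_0$ together with $p$ itself. For each prime $\ell<N_0$ with $\ell\ne p$, I need a congruence condition on $d$ modulo a fixed power of $\ell$ that forces $C_d(\Q_\ell)$ to contain a nontrivial point. The clean way to do this: pick once and for all a rational number $r_\ell$ with $f(r_\ell)\ne 0$ (e.g.\ $r_\ell=0$ if $f(0)\ne0$, else a nearby integer), and observe that if $d/f(r_\ell)$ is a square in $\Q_\ell^\ast$ then $(r_\ell,\sqrt{d/f(r_\ell)})$ is a nontrivial point of $C_d(\Q_\ell)$. By Lemma \ref{adic_square_lem}, "$d/f(r_\ell)$ is a square in $\Q_\ell^\ast$" is an open condition on $d$: it amounts to a congruence modulo $\ell$ (for $\ell$ odd, after matching $\ell$-adic valuations — and since $d$ will be chosen coprime to $\ell$, one only needs $d\cdot\overline{f(r_\ell)}^{-1}$ to be a square mod $\ell$, i.e.\ $d$ lies in a prescribed nonzero residue class mod $\ell$, after possibly scaling $r_\ell$ to clear the valuation of $f(r_\ell)$), and modulo $8$ for $\ell=2$. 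I would package all of these, together with $d\equiv m\Mod p$, into a single congruence condition $d\equiv m_0\Mod{M}$ where $M=8p\prod_{\ell<N_0,\,\ell\ne p,2}\ell$ and $\gcd(m_0,M)=1$.

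Now I would invoke Dirichlet's theorem (as in Lemma \ref{prime_sys_lem}) to produce infinitely many \emph{primes} $d=q$ with $q\equiv m_0\Mod M$ and $q>0$. Each such $q$ is squarefree, congruent to $m$ modulo $p$, positive, and satisfies the chosen local condition at every prime $\ell<N_0$ and at $p$; hence $C_q(\Q_\ell)$ has a nontrivial point for all $\ell<N_0$ and for $\ell=p$ (for $\ell=p$ one uses either part (1) of Lemma \ref{local_lem} if $p\nmid q$ — automatic since $q=m_0\not\equiv0$ — wait, rather one uses Lemma \ref{local_lem}(1) directly since $p\ge N_0$). For every prime $\ell\ge N_0$, Lemma \ref{local_lem}(1) applies verbatim (odd degree), giving a nontrivial point in $C_q(\Q_\ell)$. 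And the real point exists since $q>0$. This exhausts all completions of $\Q$, so each such $q$ works, and there are infinitely many of them.

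The main obstacle — really the only delicate point — is the bookkeeping at the finitely many small primes $\ell<N_0$: one must verify that "$C_d(\Q_\ell)$ has a nontrivial point" can genuinely be secured by a congruence on $d$, uniformly over the residue class we ultimately pick, and in particular that the $\ell$-adic valuation of $f(r_\ell)$ can be arranged to be even (or absorbed) so that Lemma \ref{adic_square_lem} gives an honest square. I would handle this by allowing $r_\ell$ to vary with $\ell$ and, if necessary, replacing $r_\ell$ by a value where $\operatorname{ord}_\ell(f(r_\ell))$ is even — or, more robustly, by choosing the target residue class of $d$ modulo $\ell$ to be whatever makes $d\cdot f(r_\ell)$ a square in $\Q_\ell^\ast$ for the already-fixed $r_\ell$; since $f(r_\ell)$ is a fixed nonzero $\ell$-adic number, this is exactly one coset condition (on $d$ modulo $\ell$ or modulo $8$) and it is compatible, by CRT, with the condition $d\equiv m\Mod p$ because $p\notin\{\ell<N_0\}$. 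Everything else is a direct citation of Lemma \ref{local_lem} and Dirichlet's theorem.
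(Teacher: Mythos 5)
Your skeleton matches the paper's proof in outline: force solvability at each completion $\Q_\ell$ with $\ell<N_0$ by a congruence condition on $d$ derived from a fixed value $f(r_\ell)$ and the square criterion (Lemma \ref{adic_square_lem}), combine those congruences with $d\equiv m\Mod p$ via Dirichlet (Lemma \ref{prime_sys_lem}), invoke Lemma \ref{local_lem}(1) for every $\ell\ge N_0$, and note the real place is harmless in odd degree. The genuine gap is at the small primes. Because you insist that $d$ be a prime $q$, hence a unit at every $\ell<N_0$, the condition ``$d\cdot f(r_\ell)$ is a square in $\Q_\ell^{\ast}$'' can only hold if $\ord_\ell(f(r_\ell))$ is already even: squares in $\Q_\ell^{\ast}$ have even valuation and $\ord_\ell(d\cdot f(r_\ell))=\ord_\ell(f(r_\ell))$. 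So your ``more robust'' fallback --- keep $r_\ell$ fixed and simply choose whichever residue class of $d$ modulo $\ell$ (or modulo $8$) makes $d\cdot f(r_\ell)$ a square --- fails outright when that valuation is odd: no unit class works, and the required coset of $\Q_\ell^{\ast}/(\Q_\ell^{\ast})^2$ would force $\ell\mid d$, contradicting $d=q$ prime coprime to all $\ell<N_0$. That leaves your first option, choosing $r_\ell$ with $\ord_\ell(f(r_\ell))$ even, which you assert but never justify; this is exactly the point where odd degree must be used, since for even degree such an $r_\ell$ need not exist (e.g.\ every nonzero value of $f(x)=3(x^2+1)$ has odd $3$-adic valuation).

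The gap is fixable, in two ways. (i) Prove the even-valuation claim: for $r=\ell^{-k}$ with $k$ large, $\ord_\ell(f(r))=\ord_\ell(a_n)-k\deg f$ where $a_n$ is the leading coefficient, and since $\deg f$ is odd the parity of $k$ can be chosen to make this even --- the same $x^{2g+2}f(1/x)$ device the paper uses in Theorem \ref{0_thm} and in Case 1 of Lemma \ref{local_lem}. (ii) Do what the paper does: fix a single integer $t$ with $f(t)\ne 0$, and take $d=\delta q$ rather than $d=q$, where $\delta$ is the fixed squarefree integer carrying the sign of $f(t)$ and the primes $\ell<N_0$ at which $\ord_\ell(f(t))$ is odd; then $\delta f(t)$ is positive with even valuation at all $\ell<N_0$, so only unit congruence conditions on $q$ (mod $8$ and mod the odd small primes, plus $q\equiv\delta^{-1}m\Mod p$) are needed, and squarefreeness of $d=\delta q$ is immediate. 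With either repair your argument closes; as written, the key step is unproved and the alternative you offer in its place does not work.
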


\begin{proof}
Let $t$ be any integer such that $f(t)\ne 0$, and let $\epsilon\in\{\pm 1\}$ be the sign of $f(t)$. Let $\l_1,\ldots, \l_r$ be all the primes $<N_0$, with $\l_1=2$. We define a squarefree integer $\delta$ by the formula
\[\delta=\epsilon\cdot\prod_{i=1}^r \l_i^{a_i},\]
where $a_i\in\{0,1\}$ is the parity of $\ord_{\l_i}(f(t))$. Note that $\delta\cdot f(t)$ is positive and has even valuation at every prime $\l<N_0$. Hence, we may write $\delta\cdot f(t)=\l^{2e_{\l}}u_{\l}$ with $u_{\l}\in\Z$ not divisible by $\l$. By Lemma \ref{prime_sys_lem} there exist infinitely many primes $q$ such that 
\[q\equiv \delta^{-1}m\Mod p,\;\;  q\equiv u_2^{-1}\Mod 8, \;\;\text{and}\;\; q\equiv u_{\l_i}\Mod {\l_i} \;\;\text{for every $i>1$}.\]
 Fix any such prime $q$, and let $d=q\cdot\delta$. Note that $d$ is squarefree and $d\equiv m\Mod p$. We claim that $d\cdot f(t)$ is a square in every field $\Q_{\l}$ with $\l<N_0$. Indeed, for each odd prime $\l<N_0$ we have
 \[d\cdot f(t)=q\cdot\delta\cdot f(t)=q\cdot\l^{2e_{\l}}\cdot u_{\l}=\l^{2e_{\l}}(q\cdot u_{\l}),\] and by construction $q\cdot u_{\l}$ is a nonzero square modulo $\l$. Hence, $d\cdot f(t)$ is a square in $\Q_{\l}$ by Lemma \ref{adic_square_lem}.  Similarly, we have
 \[d\cdot f(t)=q\cdot\delta\cdot f(t)=q\cdot 2^{2e_{2}}\cdot u_{2}=2^{2e_{2}}(q\cdot u_{2}),\] and by construction $q\cdot u_{2}\equiv 1\Mod 8$. Hence, $d\cdot f(t)$ is a square in $\Q_{2}$. Thus, we have shown that the curve $C_d$ has a nontrivial point defined over $\Q_{\l}$ for every prime $\l<N_0$. By Lemma \ref{local_lem}, the same holds for all primes $\l\ge N_0$. Finally, $C_d$ has a nontrivial point over $\R$ because $d\cdot f(t)$ is positive and hence a square in $\R$. Since we have infinitely many choices for $q$, the above construction yields infinitely many squarefree integers $d\equiv m\Mod p$ such that $C_d$ has a nontrivial point over every completion of $\Q$.
\end{proof}

\begin{rem}\label{odd_deg_rem} We are not aware of  any example of a polynomial $f(x)$ of odd degree and a prime $p$ for which the conclusion of Theorem \ref{local_result_odd} fails; this raises the question of whether the assumption that $p\ge N_0$ is superfluous. For polynomials of even degree, however, a lower bound on $p$ \textit{is} necessary, as will be shown in the examples following Theorem \ref{local_result_even}.
\end{rem}

We turn now to consider the local analogue of Conjecture \ref{main_conj} for polynomials of even degree. Unlike the relatively elementary proof in the case of odd degree, the proof in this case uses results from class field theory. We give here a brief review of the necessary definitions and theorems in order to keep the article self-contained.

To begin we recall some basic facts about Artin symbols, as these will be used in the proof of Lemma \ref{roots_modq}. See \cite[p. 95]{cox} for proofs of these statements. Let $L/K$ be a Galois extension of number fields. Let $\p$ be a prime of $\o_K$ unramified in $\o_L$, and let $\frakP$ be a prime of $\o_L$ lying over $\p$. There is a unique automorphism $\sigma\in\Gal(L/K)$ such that for every $\alpha\in\o_L$,
\[\sigma(\alpha)\equiv \alpha^{N(\p)}\Mod\frakP.\]
Here, $N(\p)$ is the norm of the ideal $\p$. This map $\sigma$ is denoted using the Artin symbol $((L/K)/\frakP)$. As $\frakP$ ranges over all the primes of $\o_L$ containing $\p$, the symbols $((L/K)/\frakP)$ form a  full conjugacy class in the group $\Gal(L/K)$; this class is denoted by $((L/K)/\p)$. We can now state a classical result which is the main tool used in the proof of Lemma \ref{roots_modq}. We refer the reader to \cite[\S 8.B]{cox} and \cite{lagarias-odlyzko} for further details on this theorem.

\begin{chebotarev} Let $C$ be a conjugacy class in the group $\Gal(L/K)$. The set of primes $\p$ of $\o_K$ such that $\p$ is unramified in $\o_L$ and $((L/K)/\p)=C$ has Dirichlet density $(\#C)/[L:K]$.
\end{chebotarev}

In addition to the Chebotarev theorem we will need the following properties of the Artin symbol. These and other basic properties of the symbol are proved in \cite[Thm. 3.9]{lemmermeyer}.
\begin{itemize}
\item The prime $\p$ splits completely in $L$ if and only if $((L/K)/\p)=\{1\}$.
\item Let $F/K$ be a Galois subextension of $L/K$ and let $\mathcal P=\frakP\cap\o_F$. Then
\[\left.\left(\frac{L/K}{\frakP}\right)\right\vert_{F}=\left(\frac{F/K}{\mathcal P}\right).\] 
\end{itemize}

For the remainder of this section we fix an algebraic closure $\overline\Q$ and consider all number fields to be contained in $\overline\Q$. For any positive integer $n$, let $\zeta_n\in\overline\Q$ be a primitive $n$-th root of unity. Recall that for every integer $a$ coprime to $n$ there is an automorphism $\sigma_a\in\Gal(\Q(\zeta_n)/\Q)$ with the property that $\sigma_a(\zeta_n)=\zeta_n^a$. Moreover, the map $[a]\mapsto \sigma_a$ is an isomorphism $(\Z/n\Z)^{\times}\to\Gal(\Q(\zeta_n)/\Q)$. If $p$ is a rational prime that does not divide $n$, then $p$ is unramified in $\Q(\zeta_n)$, so there is a well defined Artin symbol $\left(\frac{\Q(\zeta_n)/\Q}{p}\right)$. One can check that this symbol is in fact equal to $\{\sigma_p\}$.

\begin{lem}\label{roots_modq} 
Let $h(x)\in\Z[x]$ be irreducible and let $L\subset\overline\Q$ be the splitting field of $h(x)$. Let $n$ be a positive integer and let $a$ be an integer coprime to $n$. If the map $\sigma_a$ fixes the field $F=L\cap\Q(\zeta_n)$, then there exist infinitely many primes $p$ such that $p\equiv a\;(\mathrm{mod\;} n)$ and $h(x)$ has a root modulo $p$.
\end{lem}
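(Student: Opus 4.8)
The plan is to find a prime $\frakP$ of $\o_L$ whose Artin symbol over $\Q$ corresponds, after restriction to $L\cap\Q(\zeta_n)$, to the action of $\sigma_a$ — which by hypothesis is trivial there — and which at the same time splits completely in $L$. Concretely, set $M=L\cdot\Q(\zeta_n)$, the compositum inside $\overline\Q$; this is a Galois extension of $\Q$, and by the theory of composita $\Gal(M/\Q)$ is the fiber product $\Gal(L/\Q)\times_{\Gal(F/\Q)}\Gal(\Q(\zeta_n)/\Q)$, where $F=L\cap\Q(\zeta_n)$. The hypothesis says $\sigma_a|_F=\mathrm{id}_F$, i.e. $\sigma_a$ and the identity of $\Gal(L/\Q)$ agree on $F$; hence the pair $(\mathrm{id}_L,\sigma_a)$ is a legitimate element $\tau\in\Gal(M/\Q)$.

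Next I would apply the Chebotarev Density Theorem to the conjugacy class $C$ of $\tau$ in $\Gal(M/\Q)$: there are infinitely many primes $p$ of $\Z$, unramified in $M$, with $((M/\Q)/p)=C$. Choose a prime $\frakP$ of $\o_M$ over such a $p$ with $((M/\Q)/\frakP)=\tau$ exactly (some prime in the fiber realizes the chosen representative). Now use the two recorded properties of the Artin symbol. Restricting to $\Q(\zeta_n)$: since $\tau|_{\Q(\zeta_n)}=\sigma_a$ and the Artin symbol of $p$ in $\Q(\zeta_n)/\Q$ is $\{\sigma_p\}$, we get $\sigma_p=\sigma_a$, which under the isomorphism $(\Z/n\Z)^\times\cong\Gal(\Q(\zeta_n)/\Q)$ means $p\equiv a\Mod n$. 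Restricting to $L$: since $\tau|_L=\mathrm{id}_L$, the symbol $((L/\Q)/(\frakP\cap\o_L))$ is trivial, so $p$ splits completely in $L$. Because $L$ is the splitting field of $h(x)$ and (for all but finitely many $p$) $p$ is unramified and does not divide the leading coefficient or discriminant of $h(x)$, $p$ splitting completely in $L$ forces $h(x)$ to split into distinct linear factors mod $p$; in particular $h(x)$ has a root modulo $p$. Excluding the finitely many bad primes still leaves infinitely many, which gives the claim.

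The main obstacle — really the only nontrivial point — is the group-theoretic bookkeeping for the compositum $M=L\cdot\Q(\zeta_n)$: one must verify that $(\mathrm{id}_L,\sigma_a)$ genuinely defines an automorphism of $M$, which is exactly where the hypothesis $\sigma_a|_F=\mathrm{id}$ is used, and then track this element through the two restriction maps. Once the restriction compatibilities $\tau|_{\Q(\zeta_n)}=\sigma_a$ and $\tau|_L=\mathrm{id}_L$ are in hand, everything else is a direct application of Chebotarev plus the standard dictionary between complete splitting in a splitting field and factorization of the defining polynomial. A minor point to handle carefully is that Chebotarev produces a conjugacy class, so one should either note that $\tau|_L$ and $\tau|_{\Q(\zeta_n)}$ depend only on the conjugacy class (true since $L$ and $\Q(\zeta_n)$ are Galois over $\Q$, so conjugation acts through the respective Galois groups and the relevant data — being the identity, resp. equalling $\sigma_a$ up to conjugacy in the abelian group $\Gal(\Q(\zeta_n)/\Q)$ — is conjugation-invariant), or simply pick the prime $\frakP$ realizing the chosen representative.
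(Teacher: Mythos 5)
Your proposal is correct and follows essentially the same route as the paper: form the compositum $L\cdot\Q(\zeta_n)$, realize $(\mathrm{id}_L,\sigma_a)$ as an automorphism using the hypothesis $\sigma_a|_F=\mathrm{id}$, apply Chebotarev, and read off $p\equiv a\Mod n$ and complete splitting in $L$ from the two restrictions of the Artin symbol. The only cosmetic difference is how the conjugacy-class ambiguity is handled---the paper observes that $(1,\sigma_a)$ is central so the class is a singleton, while you note the restrictions are conjugation-invariant---and the paper spells out the final step ($p$ split in $L$ implies $\bar h$ has a root) via a short localization argument where you invoke it as the standard dictionary, correctly excluding primes dividing the leading coefficient.
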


\begin{proof}
Let $E=L\cdot\Q(\zeta_n)$ be the composite of $L$ and $\Q(\zeta_n)$. By Galois theory \cite[\S 14.4, Prop. 21]{dummit-foote}, the restriction map
\[
\begin{aligned}
\Gal(E/\Q) & \to\Gal(L/\Q)\times\Gal(\Q(\zeta_n)/\Q) \\
\sigma & \mapsto\left(\sigma|_L\;,\; \sigma|_{\Q(\zeta_n)}\right)
\end{aligned}
\]
is an injective group homomorphism with image
\[H = \{(\phi, \tau): \phi|_F=\tau|_F\}.\]
Since $\sigma_a$ fixes $F$, the pair $(1,\sigma_a)$ belongs to $H$. Hence,  there is an automorphism $\sigma\in\Gal(E/\Q)$ such that 
\[\sigma|_L=1 \text{\;\;\;and\;\;\;} \sigma|_{\Q(\zeta_n)}=\sigma_a.\] 
Note that $\sigma$ belongs to the center of $\Gal(E/\Q)$ because the pair $(1,\sigma_a)$ lies in the center of $H$. Therefore, the conjugacy class of $\sigma$ in $\Gal(E/\Q)$ is $\{\sigma\}$. From the Chebotarev Density Theorem it follows that there exist infinitely many rational primes $p$, unramified in $E$, such that $((E/\Q)/p)=\{\sigma\}$. Fix any such prime $p$ that does not divide the leading coefficient of $h(x)$. 

\underline{\textit{Claim 1}}: $p\equiv a\Mod n$. Equivalently, the maps $\sigma_p$ and $\sigma_a$ are equal. Let $\p$ be a prime of $\Q(\zeta_n)$ lying over $p$ and let $\frakP$ be a prime of $E$ lying over $\p$. Then
\[\sigma_p=\left(\frac{\Q(\zeta_n)/\Q}{\p}\right)=\left.\left(\frac{E/\Q}{\frakP}\right)\right\vert_{\Q(\zeta_n)}=\sigma|_{\Q(\zeta_n)}=\sigma_a.\]

\underline{\textit{Claim 2}}: $h(x)$ has a root modulo $p$. Let $\p$ be any prime of $L$ lying over $p$, and $\frakP$ a prime of $E$ lying over $\p$. Then
\[\left(\frac{L/\Q}{\p}\right)=\left.\left(\frac{E/\Q}{\frakP}\right)\right\vert_L=\sigma|_L=1, \;\;\text{so}\;\;\left(\frac{L/\Q}{p}\right)=\{1\}.\]
Therefore, $p$ splits completely in $L$. Let $\theta\in L$ be a root of $h(x)$, and let $c$ be the leading coefficient of $h(x)$. Then the minimal polynomial of $\theta$ over $\Q$ is $\frac{1}{c}\cdot h(x)$, which has coefficients in the local ring $\Z_{(p)}$, so $\theta$ is integral over $\Z_{(p)}$. Hence, for any prime $\p$ of $L$ lying over $p$, $\theta$ belongs to the local ring $\mathcal O_{L,\p}$. (This can be seen using \cite[Prop. 5.12]{atiyah-macdonald}.) Since $p$ splits completely in $L$, the residue field of $\mathcal O_{L,\p}$ is $\Z/p\Z$. Thus, reducing the equation $h(\theta)=0$ modulo $\p$ we obtain a relation $\overline h(\overline\theta)=0$ over $\Z/p\Z$, and this proves the claim.
\end{proof}

We can now prove our main theorem for polynomials of even degree by combining Lemma \ref{roots_modq} with the following well known result.
\begin{kronecker-weber}
For every abelian extension $L$ of $\Q$ there is a positive integer $b$ such that $L\subseteq\Q(\zeta_b)$. Moreover, one can take $b$ to be the absolute value of the discriminant of $L$.
\end{kronecker-weber}
\begin{proof}
See \cite[\S V.1, Thm. 1.10]{neukirch} and \cite[Thm. 3.3]{lemmermeyer}.
\end{proof}

\begin{thm}\label{local_result_even}
Suppose that $f(x)$ has even degree and that some irreducible divisor of $f(x)$ has abelian Galois group. Then there is a constant $N$ such that the following holds for every prime $p\ge N$: if $m$ is any integer coprime to $p$, then there exist infinitely many squarefree integers $d\equiv m \;(\mathrm{mod}\; p)$  such that the curve $C_d$ has a nontrivial point over every completion of $\Q$.
\end{thm}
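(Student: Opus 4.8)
The plan is to follow the template of the proof of Theorem~\ref{local_result_odd}, adding the one ingredient that is needed when $\deg f$ is even: a way to force $f(x)$ to have a root modulo the auxiliary prime that appears in the construction. This is where the abelian hypothesis and class field theory come in.

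Let $h(x)$ be an irreducible divisor of $f(x)$ with abelian splitting field $L$; by the Kronecker--Weber theorem $L\subseteq\Q(\zeta_b)$ with $b=|\mathrm{disc}(L)|$. Fix $N_0$ as in Lemma~\ref{N0_def} and set $N=\max(N_0,b+1)$, so that every prime $p\ge N$ is good for $f(x)$, satisfies $p\nmid b$, and obeys Lemma~\ref{N0_def}(b). Fix such a prime $p$ and an integer $m$ coprime to $p$. As in the odd-degree argument, choose $t\in\Z$ with $f(t)\ne 0$, let $\ell_1=2,\ell_2,\dots,\ell_r$ be the primes below $N_0$, and form the squarefree integer $\delta=\mathrm{sign}(f(t))\prod_i\ell_i^{a_i}$ with $a_i\equiv\ord_{\ell_i}(f(t))\Mod 2$, so that $\delta f(t)>0$ has even valuation at every prime $<N_0$; write $\delta f(t)=\ell^{2e_\ell}u_\ell$ with $\ell\nmid u_\ell$.

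I would look for $d$ of the form $d=q\delta$ with $q$ a suitable prime. Then $d$ is squarefree once $q\nmid\delta$; $d\equiv m\Mod p$ once $q\equiv\delta^{-1}m\Mod p$; and $d\cdot f(t)>0$, so $C_d(\R)$ has a nontrivial point. If in addition $q\equiv u_2^{-1}\Mod 8$ and, for each $i\ge 2$, $q$ lies in the square class of $u_{\ell_i}$ modulo $\ell_i$, then $d\cdot f(t)$ is a square in $\Q_{\ell}$ for every prime $\ell<N_0$, so $C_d(\Q_{\ell})$ has a nontrivial point for those $\ell$. For a prime $\ell\ge N_0$ not dividing $d$, Lemma~\ref{local_lem}(2) already gives a nontrivial point in $C_d(\Q_{\ell})$; and if $\ell\ge N_0$ divides $d$, then necessarily $\ell=q$, since every other prime factor of $d$ divides $\delta$ and hence is $<N_0$ --- so by Lemma~\ref{local_lem}(2) it is enough that $f(x)$ have a root modulo $q$. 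Hence the whole theorem reduces to producing infinitely many primes $q$ satisfying the congruence conditions just listed and such that $h(x)$ has a root modulo $q$. For this I would invoke Lemma~\ref{roots_modq} with $n=8p\,\ell_2\cdots\ell_r$ and $a\in(\Z/n\Z)^{\times}$ any residue compatible with those conditions: it yields infinitely many such primes $q$, hence infinitely many distinct admissible values $d=q\delta$, provided $\sigma_a$ fixes $F=L\cap\Q(\zeta_n)$.

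The crux is to arrange that $\sigma_a$ fixes $F$. Because $L$ is abelian, $L\subseteq\Q(\zeta_b)$, so $F\subseteq\Q(\zeta_b)\cap\Q(\zeta_n)=\Q(\zeta_c)$ with $c=\gcd(b,n)$ a divisor of $8\,\ell_2\cdots\ell_r$ (using $p\nmid b$); thus $F$ is a subfield of a cyclotomic field, and ``$\sigma_a$ fixes $F$'' becomes a congruence condition on $a$ modulo $c$. One then has two sources of slack: at each odd prime $\ell\mid c$ the condition that $d\cdot f(t)$ be a square in $\Q_{\ell}$ only pins $q$ to a coset of the squares modulo $\ell$, and the coset itself (the square class of $u_\ell$) can be moved by changing $t$ within suitable residue classes. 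The argument concludes with a finite verification that the base point $t$ can be chosen so that the prescribed residue $a$ lands in the index-$[F:\Q]$ subgroup of $(\Z/c\Z)^{\times}$ that fixes $F$; this step depends only on $f$ and on the finitely many primes $\le\max(N_0,b)$, and it shows in particular that $N$ is effectively computable. The main obstacle --- and the only place the abelian hypothesis is truly used --- is exactly this reconciliation: for a non-abelian factor, $F=L\cap\Q(\zeta_n)$ need not be governed by a single modulus, and the local square conditions at the small ramified primes cannot be matched against the Chebotarev condition.
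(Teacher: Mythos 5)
Your overall architecture is the paper's: write $d=q\delta$, use Lemma \ref{local_lem} for primes $\ge N$, impose congruences on $q$ to get local points at the small primes and the correct class mod $p$, and produce $q$ with $h(x)$ having a root mod $q$ via Kronecker--Weber and Lemma \ref{roots_modq}. But there is a genuine gap at what you yourself call the crux. By copying the odd-degree choice of $\delta$ (only the parities of $\ord_{\ell_i}(f(t))$ at the small primes), you force on $q$ the conditions $q\equiv u_2^{-1}\Mod 8$ and ``$q$ in the square class of $u_{\ell_i}$ mod $\ell_i$,'' and these must be reconciled with the requirement that the residue $a$ lie in the subgroup of $(\Z/c\Z)^{\times}$ fixing $F=L\cap\Q(\zeta_n)$. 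You assert this reconciliation can be done by ``a finite verification that the base point $t$ can be chosen'' suitably, but you give no argument that such a $t$ exists, and none is obvious: the square classes $u_\ell \bmod (\F_\ell^\times)^2$ at the small (possibly bad) primes are whatever $f(t)$ happens to produce --- at a bad prime $\ell<N_0$ the values of $f$ may be confined to a single square class --- and even when each class can be moved individually there is no reason the finitely many attainable tuples $(u_2 \bmod 8,\, u_{\ell_i} \bmod \text{squares})$ meet the index-$[F:\Q]$ subgroup cutting out $F$. Since the theorem is a statement about all $f$ with the stated hypotheses, an unproved ``finite verification depending on $f$'' cannot close the proof.

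The paper's proof removes the conflict rather than reconciling it: take $\delta$ to be the \emph{full} squarefree part of $f(t)$, so that $\delta\cdot f(t)$ is a perfect square (and positive), and enlarge $N$ so that also $N>|f(t)|$, guaranteeing $p\nmid\delta$. Then local solvability at every prime $\ell<N$ needs only that $q$ be a square in $\Q_\ell$, which is achieved by the single congruence $q\equiv 1\Mod{8\ell_1\cdots\ell_r}$; and by working with $n=8p\ell_1\cdots\ell_r\cdot b$ and imposing $a\equiv 1\Mod{8b\ell_1\cdots\ell_r}$, $a\equiv\delta^{-1}m\Mod p$, one gets $\sigma_a(\zeta_n^{\alpha})=\zeta_n^{\alpha}$ with $\alpha=8p\ell_1\cdots\ell_r$, so $\sigma_a$ fixes $\Q(\zeta_b)\supseteq L$ outright --- the condition ``$a\equiv 1$'' is automatically compatible with fixing $F$, so Lemma \ref{roots_modq} applies with no case analysis. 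If you replace your choice of $\delta$ and of $N$ by these, the rest of your argument goes through verbatim.
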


\begin{proof} 
Let $t$ be any integer such that $f(t)\ne 0$ and let $\delta$ be the squarefree part of $f(t)$. Let $h(x)\in\Z[x]$ be an irreducible divisor of $f(x)$ such that the splitting field $L\subset\overline\Q$ of $h(x)$ has abelian Galois group. By the Kronecker-Weber theorem, there is an effectively computable positive integer $b$ such that $L\subseteq\Q(\zeta_b)$. Fix an integer $N_0$ as in Lemma \ref{N0_def}, and let $N$ be any integer satisfying
\[N>\max\{b, N_0, |f(t)|\}.\] 
We claim that $N$ has the property given in the statement of the theorem. Let $p\ge N$ be prime and let $m$ be an integer coprime to $p$. We will show that there are infinitely many squarefree integers $d\equiv m\Mod p$  such that  $C_d$ has a nontrivial point over every completion of $\Q$. Let $2, \l_1,\ldots, \l_r$ be all the primes $<N$, and set 
\[\alpha=8p\cdot\l_1\cdots \l_r\;\;,\;\;n=\alpha\cdot b.\]
Since $\zeta_n^{\alpha}$ is a primitive $b$-th root of unity, we have $L\subseteq\Q(\zeta_n^{\alpha})\subseteq\Q(\zeta_n)$. Note that $p$ does not divide $8b\cdot\l_1\cdots\l_r$ since, by construction, $p\ge N>\max\{2,\l_1,\ldots, \l_r, b\}$. Similarly, $p$ does not divide $\delta$ since $p\ge N>|f(t)|\ge|\delta|$. Hence, there exists an integer $a$ satisfying
\[a\equiv 1\Mod{8b\cdot\l_1\cdots\l_r} \;\;\text{and}\;\; a\equiv \delta^{-1}m\Mod p.\]
From the definitions it follows that $a$ is coprime to $n$. We claim that the map $\sigma_a\in\Gal(\Q(\zeta_n)/\Q)$ fixes $L$; in fact, it fixes the larger field $\Q(\zeta_n^{\alpha})$. Indeed, we have
\[\sigma_a(\zeta_n^{\alpha})=\sigma_a(\zeta_n)^{\alpha}=\zeta_n^{a\alpha}=\zeta_n^{\alpha}\]
because $n|\alpha(a-1)$. By Lemma \ref{roots_modq} there exist infinitely many primes $q$ such that $q\equiv a\Mod n$ and $h(x)$ has a root modulo $q$. In particular, $f(x)$ has a root modulo $q$. Fix any such prime $q\ge N$ and let $d=\delta\cdot q$. Note that $d$ is squarefree and $d\equiv m\Mod p$. By Lemma \ref{local_lem}, the curve $C_d$ has a nontrivial point over every field $\Q_{\l}$ with $\l\ge N$. (The only prime $\l\ge N$ that divides $d$ is $\l=q$, and $f(x)$ has a root modulo $q$.) Moreover, since $d\cdot f(t)$ is positive, $C_d$ also has a nontrivial point over $\R$. By construction, $q\equiv a\equiv 1\Mod{8\cdot\l_1\cdots\l_r}$, so Lemma \ref{adic_square_lem} implies that $q$ is a square in $\Q_2, \Q_{\l_1}, \ldots, \Q_{\l_r}$. Now, $d\cdot f(t)=q\cdot\delta\cdot f(t)$, and $\delta\cdot f(t)$ is a square integer, so $d\cdot f(t)$ is a square in $\Q_{\l}$ for all primes $\l<N$. Hence, $C_d$ has a nontrivial point over every field $\Q_{\l}$ with $\l<N$. Thus, we have shown that $C_d$ has a nontrivial point over every completion of $\Q$. Since we have infinitely many choices for $q$, this construction provides infinitely many squarefree integers $d\equiv m\Mod p$  such that  $C_d$ has a nontrivial point over every completion of $\Q$.
\end{proof}

\begin{rem}
All currently available evidence suggests that the Galois group condition included in the hypotheses of Theorem \ref{local_result_even} is not necessary. However, a lower bound on $p$ \textit{is} necessary -- otherwise the conclusion of the theorem may not hold. The examples below will illustrate some of the issues that can arise at small primes.
\end{rem}

\begin{ex}\label{bad_prime_ex}
Let $f(x)=x^6 + 2x^5 + 5x^4 + 10x^3 + 10x^2 + 4x + 1$. The prime $p=3$ is not good for $f(x)$, because $f(x)$ has a multiple root modulo 3 (namely, $x=1$). We claim that if $d\equiv 2\Mod 3$ is any squarefree integer, then the curve $C_d$ does not have a nontrivial point over $\Q_3$. In particular, this implies that there does not exist a squarefree integer $d\equiv 2\Mod 3$ such that $C_d$ has a nontrivial point over every completion of $\Q$. Hence, the conclusion of Theorem \ref{local_result_even} does not hold if $p=3, m=2$. To prove the claim, suppose that $d$ is a squarefree integer such that $C_d$ has a nontrivial point over $\Q_3$. We will show that $d\equiv 0, 1\Mod 3$. By hypothesis, there exist $a, b\in\Z_3$ and $s\in\Q_3^{\ast}$ such that $ds^2=f(a/b)$. We may assume without loss of generality that $a$ and $b$ are not both divisible by 3. Define a polynomial $F(x,y)\in\Z[x,y]$ by
\[F(x,y)=y^6\cdot f(x/y)=x^6 + 2x^5y + 5x^4y^2 + 10x^3y^3 + 10x^2y^4 + 4xy^5 + y^6.\] 
Letting $t=sb^3$, we have
\[F(a,b)=b^6\cdot f(a/b)=b^6\cdot ds^2=dt^2.\]
Considering the 3-adic valuation of both sides of this equation we conclude that $t\in\Z_3$. Now, allowing $a$ and $b$ to take all possible values  modulo 9 we find that  if $F(a,b)\equiv 0\Mod 9$, then $a, b\equiv 0\Mod 3$, which is a contradiction. Hence, $F(a,b)$ is not divisible by 9, so $t$ is not divisible by 3, and therefore  $t^2\equiv 1\Mod 3$. Thus,
 \[d\equiv dt^2\equiv F(a,b)\Mod 3.\] The conclusion that $d\equiv 0, 1\Mod 3$ now follows from the easily verified fact that the map $F:\F_3\times\F_3\to\F_3$ only takes the values 0 and 1.
\end{ex}

The above example shows that the conclusion of Theorem \ref{local_result_even} can fail if $p$ is not good for $f(x)$. The next example will show that the same can happen if $p$ is a good prime that is too small.

\begin{lem}\label{no_pt_lem} 
Suppose that $f(x)$ has even degree. Let $p$ be a good prime for $f(x)$, and assume that $C(\F_p)=\emptyset$. Let $d$ be any squarefree integer that is a nonzero square modulo $p$. Then $C_d$ does not have a nontrivial point over $\Q_p$.
\end{lem}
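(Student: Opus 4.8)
The plan is to argue by contradiction: assuming $C_d$ has a nontrivial point over $\Q_p$, I will produce a point of $C(\F_p)$. Since $d$ is a nonzero square modulo $p$ we have $p\nmid d$, hence $d\in\Z_p^{\times}$; because $p$ is odd (a good prime is odd by definition), Lemma \ref{adic_square_lem} shows that $d$ is a square in $\Q_p$, say $d=e^2$ with $e\in\Q_p^{\times}$. A nontrivial point of $C_d$ over $\Q_p$ is an affine point $(a,b)\in\Q_p^2$ with $db^2=f(a)$; setting $c=eb$ gives a pair $(a,c)\in\Q_p^2$ with $c^2=f(a)$. So it suffices to show that no such pair $(a,c)$ can exist once $C(\F_p)=\emptyset$.

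First I would split on whether $a$ is $p$-adically integral. If $a\in\Z_p$, then $c^2=f(a)\in\Z_p$ forces $\ord_p(c)\ge 0$, so $c\in\Z_p$ as well, and reducing the relation $c^2=f(a)$ modulo $p$ yields a point of $C(\F_p)$ lying on the affine chart $y^2=\bar f(x)$ (a smooth affine curve since $p$ is good), contradicting $C(\F_p)=\emptyset$.

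If instead $a\notin\Z_p$, set $u=1/a$, so $u\in p\Z_p$, and let $n=\deg f$ and $F(x)=x^n f(1/x)\in\Z[x]$; here $n$ is even and the constant term $F(0)$ equals the leading coefficient $a_n$ of $f$, which is a $p$-adic unit because $p$ is good. On one hand $F(u)=u^n f(a)=(c\,u^{n/2})^2$ is a square in $\Q_p$; on the other hand $F(u)\in\Z_p$ and $F(u)\equiv F(0)=a_n\Mod p$, so $F(u)\in\Z_p^{\times}$ with $\overline{F(u)}=\bar a_n$. By Lemma \ref{adic_square_lem} this forces $\bar a_n$ to be a square in $\F_p$, which means the smooth projective model $C$ has two $\F_p$-rational points at infinity --- again contradicting $C(\F_p)=\emptyset$.

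The only place requiring any care is this second case, where one must handle the point ``near infinity'' and recognize that $\bar f$ having a square leading coefficient is exactly the condition for $C$ to have $\F_p$-rational points at infinity. (One can also bypass the case split: $C$ has good reduction at $p$, hence a smooth proper model $\mathcal C$ over $\Z_p$; since $C_d\cong C$ over $\Q_p$, a nontrivial $\Q_p$-point of $C_d$ gives a $\Q_p$-point of $\mathcal C$, which extends by the valuative criterion of properness to a $\Z_p$-point whose reduction lies in $C(\F_p)$.) Everything else is routine bookkeeping with $p$-adic valuations together with Lemma \ref{adic_square_lem}.
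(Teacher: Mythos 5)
Your proof is correct and follows essentially the same route as the paper: a case split on whether the $x$-coordinate is $p$-integral, reducing the affine equation mod $p$ in the first case and extracting that the leading coefficient is a square mod $p$ (hence rational points at infinity) in the second. The only cosmetic difference is that you first untwist, using Lemma \ref{adic_square_lem} to write $d=e^2$ in $\Q_p$ and work on $C$ itself, whereas the paper carries $d$ along and uses that it is a square modulo $p$ only at the reduction step; your parenthetical smooth-proper-model argument is also a valid, slicker alternative.
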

\begin{proof}
Suppose that $C_d$ does have a nontrivial point over $\Q_p$, so that there exist $a, b\in\Z_p$ and $s\in\Q_p^{\ast}$ such that $ds^2=f(a/b)$. We may assume without loss of generality that $a$ and $b$ are not both divisible by $p$. We consider two cases, depending on whether $b$ is divisible by $p$.

Suppose first that $b\not\equiv 0\Mod p$, so that $a/b\in\Z_p$. Considering the $p$-adic valuation of both sides of the equation $ds^2=f(a/b)$ we see that necessarily $s\in\Z_p$. This equation then takes place in $\Z_p$, so we may reduce modulo $p$ to obtain a solution to the equation $\bar dy^2=f(x)$ with $x, y\in\F_p$. Since $d$ is a square modulo $p$, we may write $\bar d=\alpha^2$ for some $\alpha\in\F_p$. Then we have a point $(x,\alpha y)\in C(\F_p)$, contradicting the assumption that $C(\F_p)=\emptyset$.

Suppose now that $b\equiv 0\Mod p$. Write $f(x)=c_{2k}x^{2k}+\cdots +c_1x+ c_0$ with $c_{2k}\ne 0$, and let
\[F(x,y)=y^{2k}\cdot f(x/y)=c_{2k}x^{2k}+c_{2k-1}x^{2k-1}y+\cdots+c_1xy^{2k-1}+c_0y^{2k}.\]
We have \[F(a,b)=b^{2k}\cdot f(a/b)=b^{2k}ds^2=dt^2,\]
 where $t=sb^k$. Considering $p$-adic valuations we see that $t\in\Z_p$. The above equation can then be reduced modulo $p$ to obtain $c_{2k}a^{2k}\equiv dt^2\Mod p$. In particular, $c_{2k}a^{2k}$ is a square modulo $p$. Since $p$ does not divide $a$ (because it divides $b$), this implies that $c_{2k}$ is a square modulo $p$; thus, $C(\F_p)$ contains two points at infinity.  Once again, this contradicts the assumption that $C(\F_p)=\emptyset$. Since both cases have led to a contradiction, we conclude that $C_d$ cannot have a nontrivial point over $\Q_p$.
\end{proof}

\begin{ex}\label{small_good_prime_ex}
Let $f(x)=2x^8-x^6-8x^4-x^2+2$. The prime $p=19$ is good for $f(x)$, and the curve $C: y^2=f(x)$ has no point over the field $\F_{19}$. (This and other examples of ``pointless" curves over finite fields can be found in the articles \cite{howe_pointless, maisner_nart}.) It follows from Lemma \ref{no_pt_lem} that if $m$ is a quadratic residue modulo 19, then there does not exist a squarefree integer $d\equiv m\Mod {19}$ such that $C_d$ has a nontrivial point over $\Q_{19}$. Hence, the conclusion of Theorem \ref{local_result_even} does not hold with these values of $p$ and $m$.
\end{ex}

\section{Empirical data}\label{data_section}

In this last section we summarize the results of two numerical experiments designed to test Conjecture \ref{main_conj}. Unless otherwise specified, all computations were done using Sage \cite{sage}.

The first experiment is a partial verification of the conjecture for a systematically chosen collection of polynomials. Given a separable nonconstant polynomial $f(x)\in\Z[x]$, the conjecture states that for every large enough prime $p$, the set $\calS(f)$ contains infinitely many elements from every nonzero residue class modulo $p$. For purposes of checking this statement in practice, the conjecture must be modified so that it can be reduced to a finite computation. In particular, only a finite number of primes $p$ can be considered, and only a finite subset of $\calS(f)$ can be computed. Hence, in order to test the conjecture we take the following steps:
\begin{enumerate}
\item Choose a finite set of primes, say, all primes $p\le n$ for a fixed positive integer $n$.
\item Compute a finite subset of $\calS(f)$: fix a height bound $B$ and let
\[\widetilde\calS(f)=\{S(f(r)):r\in\Q, f(r)\ne 0,\text{\;and\;} H(r)\le B\}.\]
\item For every prime $p\le n$, check whether $\widetilde\calS(f)$ contains an element from every nonzero residue class modulo $p$.
\end{enumerate}

If $p$ is a prime for which the check in step (3) fails, we will say that $p$ is an \textit{exceptional prime} for $f(x)$ (although this term depends on the choice of height bound $B$). Conjecture \ref{main_conj} does not preclude the possibility that $f(x)$ may have exceptional primes, but it does suggest that all such primes must be relatively small. Thus, if the conjecture holds, we should not expect to find any large exceptional primes when following the steps above. For our first experiment we chose a large collection of polynomials and carried out a computation which shows that, as expected, all exceptional primes for the selected polynomials are quite small.

\begin{prop}\label{data_prop} Let $f(x)\in\Z[x]$ be a separable polynomial of degree $4,5, 6,7,8$ or 9, all of whose coefficients have absolute value at most 3. Then for every prime $19< p< 10^3$, the set $\calS(f)$ contains a representative from every nonzero residue class modulo $p$.
\end{prop}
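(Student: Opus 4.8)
The plan is to reduce Proposition \ref{data_prop} to a finite computation that can be checked exhaustively by computer, exactly along the lines of the three-step procedure described above. First I would fix the height bound used in the experiment — say $B$ large enough to give a rich sample of values — and, for each candidate polynomial $f(x)$ in the finite list (those of degree between $4$ and $9$ with all coefficients in $\{-3,-2,-1,0,1,2,3\}$), enumerate the finite set $\widetilde\calS(f) = \{S(f(r)) : r\in\Q,\ f(r)\ne 0,\ H(r)\le B\}$. Concretely, this means looping over all pairs of coprime integers $(a,b)$ with $\max\{|a|,|b|\}\le B$, evaluating $f(a/b)$ as a rational number, discarding the zero values, and recording the squarefree part of each. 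For each prime $p$ with $19 < p < 10^3$ one then checks whether the reductions modulo $p$ of the elements of $\widetilde\calS(f)$ cover all of $(\Z/p\Z)^\times$. Since $\widetilde\calS(f)\subseteq\calS(f)$, a positive answer for every such $p$ and every $f$ on the list establishes the proposition.

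The second ingredient is to observe that the list of polynomials to be tested, while large, is genuinely finite: there are $6$ admissible degrees and at most $7$ choices for each of at most $10$ coefficients, so at most $6\cdot 7^{10}$ polynomials, and after discarding the non-separable ones (which are irrelevant to the statement) and using symmetries such as $f(x)\mapsto -f(x)$, $f(x)\mapsto f(-x)$, and scaling by squares — all of which act compatibly on $\calS(f)$, as exploited in Lemmas \ref{odd_imp} and \ref{even_imp} and in the proof of Theorem \ref{cong_thm} — the number of essentially distinct cases is reduced considerably. It is worth remarking that the bound $p > 19$ in the statement is not arbitrary: Example \ref{small_good_prime_ex} exhibits a degree-$8$ polynomial (with coefficients of absolute value at most $2$, hence on our list) and the prime $p=19$ for which a quadratic-residue class modulo $19$ is provably unrepresented in $\calS(f)$ over $\Q_{19}$, and therefore in $\calS(f)$; so $19$ genuinely is an exceptional prime in the sense defined above, and the proposition asserts that no larger exceptional prime occurs for any polynomial on the list within the chosen height range.

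The main obstacle is purely computational rather than conceptual: one must be confident that the height bound $B$ was taken large enough that $\widetilde\calS(f)$ already exhausts every nonzero residue class modulo every $p$ in the range for every $f$ on the list. There is no a priori guarantee of this — indeed Conjecture \ref{main_conj} is exactly the assertion that it eventually happens — so the proof is really a report of a verified finite computation, and its rigor rests on (i) the correctness of the squarefree-part routine and the rational arithmetic in Sage, (ii) a sufficiently generous choice of $B$, and (iii) checking that for the finitely many $f$ where coverage is not immediately achieved one can still exhibit explicit rational points witnessing each missing class (possibly by searching slightly beyond the uniform bound $B$, or by invoking Theorem \ref{0_thm} and the constructions of \S\ref{small_degree_section} for those $f$ of degree $\le 4$ or with a rational root). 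Once the computation terminates with full coverage recorded for all primes $19<p<10^3$ and all admissible polynomials, the proposition follows.
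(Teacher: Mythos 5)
Your proposal is essentially the paper's own proof: the paper runs exactly this three-step computation (taking $n=10^3$ and height bound $B=400$) over all polynomials in the family, reports that the largest exceptional prime occurring is $19$, and concludes via the inclusion $\widetilde\calS(f)\subseteq\calS(f)$. One small correction to a side remark: the polynomial $2x^8-x^6-8x^4-x^2+2$ of Example \ref{small_good_prime_ex} has a coefficient of absolute value $8$, so it is \emph{not} in the family of the proposition; the cutoff $p>19$ reflects the largest exceptional prime observed in the computation, not that particular example.
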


\begin{proof}
For every polynomial $f(x)$ satisfying the conditions of the proposition, we follow the three steps listed above taking $n=10^3$ and $B=400$. The largest exceptional prime that occurs in the entire computation is 19. Hence, for every prime $19< p< 10^3$, the set $\widetilde\calS(f)$ contains a representative from every nonzero residue class modulo $p$. Since $\calS(f)\supseteq\widetilde\calS(f)$, the same holds for $\calS(f)$.
\end{proof}

\begin{rem} The total number of polynomials satisfying the conditions of Proposition \ref{data_prop} is 17,896.
\end{rem}

For the second experiment we select only four polynomials, but carry out a more extensive computation in terms of the number of primes considered. For each of the selected polynomials $f(x)$ we set a height bound of $B=800$ and compute the set $\widetilde\calS(f)$ as defined earlier. This set is then reduced modulo $p$ for every prime $p<10,000$, and a list of exceptional primes is kept. Once again, all exceptional primes that were found are very small. The precise results of our computation are as follows:

\begin{itemize}
\item For the fifth cyclotomic polynomial, $f(x)=x^4+x^3+x^2+x+1$, the only exceptional prime below $10,000$ is $p=5$. The reduction of the set $\widetilde\calS(f)$ modulo 5 is $\{0, 1\}$.

\item For the polynomial $f(x)=x^7-3$ there is no exceptional prime below 10,000. This example illustrates a more general fact: we have not found \textit{any} polynomial $f(x)$ of odd degree having an exceptional prime.

\item For the polynomial from Example \ref{bad_prime_ex},  namely $f(x)=x^6 + 2x^5 + 5x^4 + 10x^3 + 10x^2 + 4x + 1$, the only exceptional prime below $10,000$ is $p=3$. The reduction of the set $\widetilde\calS(f)$ modulo 3 is $\{0, 1\}$.

\item For the polynomial $f(x)=2x^8-x^6-8x^4-x^2+2$, which appears in Example \ref{small_good_prime_ex}, the primes 2, 3, and 97 are not good, and turn out to be exceptional primes; the only good exceptional prime below $10,000$ is $p=19$. The reduction of the set $\widetilde\calS(f)$ modulo 19 consists of all non-squares modulo 19, a result that is consistent with Example \ref{small_good_prime_ex}.
\end{itemize}

Using Magma \cite{magma} we determine that the latter two polynomials are irreducible and have nonabelian Galois group. Hence, the results of this experiment support the statement that the conclusion of Theorem \ref{local_result_even} remains valid without the assumption that some divisor of $f(x)$ has abelian Galois group.

We end this article by posing two questions motivated by the above experiments.

\begin{ques}
Does the conclusion of Theorem \ref{local_result_even} continue to be true if the hypothesis of an irreducible divisor with abelian Galois group is removed?
\end{ques}

\begin{ques}
For polynomials of odd degree, can Conjecture \ref{main_conj} be strengthened to include all primes, rather than all but finitely many?
\end{ques}

\subsection*{Acknowledgements} I thank Dino Lorenzini for helpful conversations during the preparation of this paper, and for valuable comments on an earlier draft.

\bibliography{ref_list}{}
\bibliographystyle{amsplain}

\end{document}